\theoremstyle{plain}
\newtheorem{thm}{Theorem}[section]
\newtheorem{prop}[thm]{Proposition}
\newtheorem{lem}[thm]{Lemma}
\newtheorem{cor}[thm]{Corollary}
\newtheorem{fact}[thm]{Fact}
\theoremstyle{definition}
\newtheorem{defn}[thm]{Definition}
\theoremstyle{remark}
\newtheorem{rem}[thm]{Remark}
\DeclareMathOperator{\Ric}{Ric}
\DeclareMathOperator{\Td}{Td}
\DeclareMathOperator{\diag}{diag}
\begin{document}
\title{Big and nef classes, Futaki Invariant and  resolutions of cubic threefolds }
\date{\today}

\author{Claudio Arezzo\footnote{ICTP Trieste and Università di Parma, arezzo@ictp.it}, Alberto Della Vedova\footnote{Università di Milano - Bicocca, alberto.dellavedova@unimib.it}}

\maketitle

\begin{abstract} 
\noindent In this note we revisit and extend few classical and recent results on the definition and use of the Futaki invariant in connection with the
existence problem for K\"ahler constant scalar curvature metrics on polarized algebraic manifolds, especially in the case of resolution of singularities. The general inspiration behind this
work is no doubt the beautiful paper by Ding and Tian \cite{DinTia1992} which contains the germs of a huge amount of the successive developments in this 
fundamental problem, and it is a great pleasure to dedicate this to Professor G. Tian on the occasion of his birthday!
\end{abstract}


\section{Introduction}

Let $X$ be a normal projective variety of dimension $n$, let $L$ be an ample line bundle on $X$, and let be fixed a $\mathbf C^*$ action on $X$ together with a linearization to $L$, that is a lifting of the given action on $X$ to an action on $L$ which is linear among the fibers.
Up to replace $L$ by some sufficiently large positive power $L^m$ (always possible for our purposes), one can suppose with no loss that $X$ is a subvariety of some complex projective space $\mathbf {CP}^d$, the line bundle $L$ is the restriction to $X$ of the hyperplane bundle, and the $\mathbf C^*$-action is induced by some one-parameter subgroup of $SL(d+1,\mathbf C)$ acting linearly on $\mathbf {CP}^d$ and leaving $X$ invariant.

Associated with these data there is a numerical invariant $F(X,L)$, named after Futaki, who introduced it as an obstruction to the existence of K\"ahler-Einstein metrics on Fano manifolds \cite{Fut1983,Fut1988}. Since then it has been widely generalized \cite{Cal1985,DinTia1992,Tia1997,Don2002,Ban2006}.
A crucial step towards a definition of stability for Fano manifolds was the extension of Futaki invariant to singular varieties. This was done by Ding-Tian, who defined a Futaki invariant for $\mathbf Q$-Fano varieties \cite{DinTia1992,Tia1997}.
Later, Donaldson defined a Futaki inavariant for polarized varieties in purely algebraic terms \cite{Don2002}. As noticed in \cite{tiCheeger}, the equivalence of all these extensions follows by results of Paul-Tian \cite{pt}.

Furthermore, the concept of Futaki invariant has been conveniently extended to the case when instead of a polarization - that is an ample line bundle - on $X$, one is given a line bundle that is just big and nef \cite{AreDelLaN2012, AreDelLaN2009}.

This last extension is of particular relevance when looking at the problem of degenerating the K\"ahler classes of canonical metrics towards the boundary of the K\"ahler cone, hence looking at possible convergence of such metrics towards singular ones.

In fact the above idea can be reversed in the hope that the existence of a singular cscK metric in a big and nef class would provide a good starting point for some deformation argument to get also smooth ones in the interior of the K\"ahler cone nearby the singular one. This turned out to be a successful strategy in a number of important situations, such as blow-ups of smooth points \cite{ap,ap2,sz,sz2}, blow-ups 
of smooth submanifolds \cite{szy}, smoothings of isolated singularities \cite{br,sp} and resolutions of isolated quotient singularities \cite{ArezzoLenaMazzieri2015,AreDelMaz2018,ADVLM}.

Besides some general observations of possible intrinsic interest, the situation studied in this note is the following:

\begin{itemize}
\item
the singular set $S$ of $X$ is finite (so that each point of S is fixed by the $\mathbf C^*$-action);
\item
$\pi : M \to X$ is an equivariant (log) resolution of singularities, i.e. $\pi$ restricts to a biholomorphism from $M \setminus \pi^{-1}(S)$ to $X \setminus S$, and for all $p\in S$ the (reduced) exceptional divisor $E_p=\pi^{-1}(p)$ is simple normal crossing;
\item
given, $p\in S$ and a collection of numbers $b_p >0$, we look at the ample line bundle $L_r = \pi^*L^r \otimes \mathcal O({\textstyle -\sum_{p \in S}} b_pE_p)$, for $r$ sufficiently large.
\end{itemize}

Our main results, Theorem \ref{thm::mainresultfutaki} and Corollary \ref{cor::mainresultintersection}, provide general formulae relating the Futaki invariant of $(X,L)$, Futaki 
of $(M,L_r)$, $b_p$, the behaviour of a potential for the $\mathbf C^*$-action at the singular points  and intersection numbers of $M$.

This results extends all known instances where a similar problem has been attacked (blow-ups at smooth points and resolutions of isolated quotient singularities in the above mentioned works), and provides many families of examples of new $K$-unstable polarized manifolds, even as resolutions of $K$-polystable normal varieties. 

Two comments are in order:
\begin{enumerate}
\item
The assumption on the normality of $X$ is not always necessary for our analysis. Yet, being the final motivation the (non-)existence of cscK metrics, we might as well assume it right away, thanks to \cite{lx};
\item
we just recall the reader that $K$-instability is indeed an obstruction to the existence of cscK metrics thanks to \cite[Theorem 1]{Don2005}.
\end{enumerate}

We end this note with the discussion of few explicit examples. Of course we need to go in dimension at least three to find non-quotient isolated singularities. In particular the case of cubic threefolds is discussed in Section \ref{sec::33folds}.
Thanks to Allcock \cite{All2003} and Liu-Xu \cite{LiuXu2017}, as recalled in Theorem \ref{tutticazzi}, $K$-polystable cubic threefolds are now classified, and for example among them it appears the zero locus $X$ of 
$$F_\Delta = x_0x_1x_2 + x_3^3 + x_4^3$$
which has three $D_4$ singularities, and continuous families of automorphisms.

Now consider a resolution $\pi : M \to X$, and let $E_j, j=1,2,3$, be the exceptional divisors.
Chosen integers $b_j>0$, consider the line bundle $$L_r = \pi^*L^r \otimes \mathcal O(-\sum_{j=0}^2 b_jE_j),$$ which is ample for all $r$ sufficiently large.

By applying our general computation of the Futaki invariant, we will show (see  Proposition \ref{FDelta}) that 
any polarized resolution $(M,L_r)$ of the cubic threefold $F_\Delta=0$ is K-unstable for $r$ sufficiently large as soon as the intersection numbers $K_M \cdot (b_0E_0)^{2}, K_M \cdot (b_1E_1)^{2},K_M \cdot (b_2E_2)^{2}$ are not all the same.

The same strategy can be applied for other examples as discussed in Section \ref{sec::33folds}.

\subsection*{Acknowledgments} Both authors at different times and places have benfited from hundreds of conversations with Prof. G. Tian on topics related to the ones studied in this note. It is a great pleasure to dedicate this paper to him, with our best wishes for his birthday!

\section{Futaki invariant}

In this section we give an account of the extension of the Futaki invariant to big and nef classes developed in \cite{AreDelLaN2009,AreDelLaN2012}.

Recall that a line bundle $B$ on a projective variety $X$ of dimension $n$ is said to be big when it has positive volume, the latter being the limit of $\dim H^0(X,B^k)/k^n$ as $k \to +\infty$.
On the other hand, $B$ is said to be nef if, for any irreducible curve $\Sigma \subset X$, the restriction of $B$ to $\Sigma$ has non-negative degree.
By Kleiman's theorem, nefness is the closure of ampleness condition, meaning that $B$ turns out to be nef if and only if for any ample line bundle $A$ there is $k>0$ such that $B^k \otimes A$ is ample.
On a smooth projective manifold, a line bundle is big and nef if and only if its first Chern class lies at the boundary of the K\"ahler cone and has positive self-intersection.

\begin{defn}\label{defn::Futaki}
Let $X$ be a normal projective variety endowed with a $\mathbf C^*$-action and let $B$ a big and nef line bundle on $M$.
Choose a linearization on $B$ and for all $k\geq0$ consider the virtual $\mathbf C^*$-representation $H_k = \sum_{q \geq 0} (-1)^q H^q(X,B^k)$.
Let $\chi(X,B^k)=\dim(H_k)$ be the Euler characteristic of $B^k$ and let $w(X,B^k)$ be the trace of the infinitesimal generator of the representation $H_k$.
For $k \to \infty$ we have an asymptotic expansion
\begin{equation}\label{eq::expdefnfutaki}
\frac{w(X,B^k)}{\chi(X,B^k)} = F_0k + F_1 + O(k^{-1}),
\end{equation}
and the Futaki invariant $F(X,B)$ of the given $\mathbf C^*$-action on $X$ is defined to be the constant term $F_1$ of expansion above.
\end{defn}

A few comments on this definition are in order.

Firstly, note that given $X$ acted on by $\mathbf C^*$ and $B$ as in the definition, one can always find a linearization of the action to $B$ \cite[Theorem 7.2]{Dol2003}.
Actually, in order to do this, perhaps one should replace $B$ with a $\mathbf C^*$-invariant line bundle $B'$ isomorphic to $B$. Since this replacement has no effect for our purposes, from now on we implicitly assume that any line bundle on $X$ is endowed with a linearization of the given $\mathbf C^*$-action on $X$.    
On the other hand, $F(X,B)$ does not depend on the chosen linearization, whereas the representation $H_k$ and the weight $w(X,B^k)$ do depend on it.
In fact, one can check that altering the linearization has the effect of adding $\lambda k \chi(X,B^k)$ to the weight $w(X,B^k)$ for some $\lambda \neq 0$, so that $F_1$ in expansion \eqref{eq::expdefnfutaki} stay unchanged.

Secondly, note that whenever $B$ is ample, $B^k$ has no higher cohomology for $k$ positive and sufficiently large. Therefore $H_k$ is a genuine representation of $\mathbf C^*$, and finally one recovers the Donaldson's definition of Futaki invariant \cite[Subsection 2.1]{Don2002}.

Thirdly, in the general case one has $\lim_{k\to\infty} k^{-n}\dim H^0(X,B^k) >0$ by definition of bigness, and $\dim H^q(X,B^k) = O(k^{n-q})$ as a consequence of nefness \cite[Theorem 1.4.40]{Laz2004}.
Hence, even in the more general case, in order to compute $F(X,B)$, one has to consider cohomology groups of $B$ up to order $q=1$.

Finally, note that for any fixed $m>0$ replacing $k$ with $mk$ in \eqref{eq::expdefnfutaki} yields the identity
\begin{equation}\label{eq::degzerohom}
F(X,B^m) = F(X,B).
\end{equation}

One advantage of definition above is that it extends the classical Futaki invariant continuously up to points of the boundary of the ample cone having non-zero volume.
More specifically, it holds the following

\begin{prop}\label{prop::contFutaki}
Let  $X$ be a normal projective variety endowed with a $\mathbf C^*$-action.
For all line bundles $B$ big and nef, and $F$ invariantly effective, as $r \to \infty$ one has
\begin{equation}
F(X,B^r \otimes F) = F(X,B) + O(1/r).
\end{equation} 
\end{prop}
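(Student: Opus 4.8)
The plan is to analyze the asymptotic expansion \eqref{eq::expdefnfutaki} for the line bundle $B^r \otimes F$ with $r$ as the large parameter, but keeping $k$ as the true asymptotic parameter of Definition \ref{defn::Futaki}, and to show that the coefficients $F_0$ and $F_1$ of $(B^r\otimes F)$ differ from those of $B$ by terms of order $O(1/r)$. First I would set up the bivariate situation: consider the function $(k,r)\mapsto w(X,(B^r\otimes F)^k)/\chi(X,(B^r\otimes F)^k)$. Since $B$ is big and nef and $F$ is invariantly effective, the twist $B^r\otimes F$ remains big and nef (bigness is preserved under adding effective divisors, nefness under the hypothesis on $F$, or one passes to a further positive power), so $F(X,B^r\otimes F)$ is well-defined for each $r$. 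By the cohomology vanishing estimates recalled in the excerpt ($\dim H^q(X,C^k)=O(k^{n-q})$ for $C$ nef, $\dim H^0(X,C^k)\sim \mathrm{vol}(C)k^n/n!$ for $C$ big and nef), both $\chi$ and $w$ are, for fixed $r$, polynomials in $k$ of degree $n+1$ and $n$ respectively up to lower-order corrections controlled uniformly; the leading behaviour is governed by intersection numbers on $X$.

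The key step is to write these intersection-theoretic coefficients explicitly in $r$. Expanding $\chi(X,(B^r\otimes F)^k)$ via (equivariant) Riemann--Roch, the top-degree-in-$k$ part is $\frac{k^n}{n!}(rB+F)^n + \frac{k^{n-1}}{2(n-1)!}(rB+F)^{n-1}\cdot(-K_X)+\cdots$, and similarly $w$ has an expansion whose coefficients are intersection numbers of $rB+F$ with the moment-type classes attached to the $\mathbf C^*$-linearization. Each such coefficient is a polynomial in $r$; dividing through by $r^n$ (the order of $(rB+F)^n$, which is $\mathrm{vol}(B)\,r^n + O(r^{n-1})$ since $\mathrm{vol}(B)>0$), one finds that $w/\chi$ for $B^r\otimes F$ has an expansion in $k$ whose coefficients are rational functions of $r$ of the form $c_i + O(1/r)$, where $c_i$ are exactly the coefficients appearing for $B$ itself. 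Reading off $F_1$, this gives $F(X,B^r\otimes F)=F(X,B)+O(1/r)$.

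The main obstacle I anticipate is making the two asymptotic regimes interact rigorously: the Futaki invariant is defined by a $k\to\infty$ limit for each fixed $r$, and I must show the $O(k^{-1})$ error in \eqref{eq::expdefnfutaki} for $B^r\otimes F$ is uniform in $r$ (after the appropriate normalization), so that the $r\to\infty$ limit can be taken. For ample $B$ and $F$ this is clean because the relevant quantities are honest polynomials in $k$ once $k\gg 0$, but in the merely big-and-nef case one only has $\dim H^q(X,C^k)=O(k^{n-q})$ and $\chi$ is not literally polynomial; one needs that the relevant ``$O$'' constants depend on $r$ only through bounded intersection numbers, which should follow from Fujita-type or Kawamata--Viehweg-type uniformity, or more simply by first replacing $B^r\otimes F$ by a large power to reduce to ample bundles and invoking \eqref{eq::degzerohom}. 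A secondary technical point is checking that the chosen linearization on $B^r\otimes F$ (induced from those on $B$ and $F$) is compatible with the weight computations; but since $F(X,\cdot)$ is linearization-independent, any convenient choice works. Once uniformity is in hand, the conclusion is a matter of comparing the first two Laurent coefficients in $r$ of each coefficient in the $k$-expansion, which is routine.
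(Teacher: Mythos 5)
Your overall skeleton --- expand $\chi(X,(B^r\otimes F)^k)$ and $w(X,(B^r\otimes F)^k)$ in $k$, observe that each coefficient is a polynomial in $r$ whose leading term is the corresponding coefficient for $B$ alone, and read off the subleading term of the ratio --- is exactly the computation that ends the paper's proof, and you correctly identify the crux: uniformity of the $k$-expansion in $r$. The gap is in how you secure that input. Tellingly, you never use the hypothesis that $F$ is invariantly effective, and that hypothesis is precisely the paper's mechanism: after using \eqref{eq::degzerohom} to pass to a power so that $F$ itself has a $\mathbf C^*$-invariant section with zero locus $D$, multiplication by the $k$-th power of that section gives an equivariant exact sequence $0\to B^{rk}\to (B^r\otimes F)^k\to (B^r\otimes F)^k|_{kD}\to 0$. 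This reduces everything to the single-variable polynomials $\chi(X,B^m)=q(m)$, $w(X,B^m)=p(m)$ evaluated at $m=rk$, plus correction terms supported on the $(n-1)$-dimensional divisor $D$, whose joint degree in $(r,k)$ drops by one and which therefore contribute only $O(1/r)$ after normalization. That elementary device is what delivers the uniformity you flag as the main obstacle.

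By contrast, the justifications you propose do not hold up as stated. First, $B^r\otimes F$ need not be big and nef (adding an invariantly effective class can destroy nefness; in the paper's application $F=\mathcal O(-\sum_p b_pE_p)$), nor is this needed --- what must be shown is that the expansion \eqref{eq::expdefnfutaki} exists for $B^r\otimes F$, which your argument should produce rather than presuppose. Second, the fallback ``replace $B^r\otimes F$ by a large power to reduce to ample bundles'' cannot work: a power of a non-ample line bundle is never ample, and \eqref{eq::degzerohom} gives homogeneity, not positivity. Third, the engine of your version is a two-variable equivariant Riemann--Roch on the possibly singular normal variety $X$: polynomiality of $\chi(X,B^{m_1}\otimes F^{m_2})$ in $(m_1,m_2)$ is indeed classical (Snapper --- so, contrary to your remark, $\chi$ is literally a polynomial even in the big-and-nef case), but the analogous polynomiality, with degree bounds, of the total weight $w$ on a singular variety is a nontrivial theorem which you invoke only implicitly; Fujita- or Kawamata--Viehweg-type uniformity is not the relevant tool, and the exact second coefficient $-K_X\cdot(rB+F)^{n-1}/(2(n-1)!)$ is neither needed nor automatic on singular $X$. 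If you grant that equivariant Riemann--Roch black box, your computation does go through (and then invariant effectiveness of $F$ would in fact be superfluous); but as written the key polynomiality step is asserted rather than proved, and the paper's exact-sequence reduction is the elementary substitute for it.
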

\begin{rem}\label{rem:defnequiveffect}
By invariantly effective line bundle, we mean a line bundle $F$ such that some positive power $F^m$ posses a $\mathbf C^*$-invariant non-zero section.
For example, any ample line bundle on $X$ is invariantly effective.
Another example is the line bundle $\mathcal O(-D)$ associated with a $\mathbf C^*$-invariant hypersurface $D \subset X$. 
In particular, the line bundle associated with an exceptional divisor of a blow-up is invariantly effective.
\end{rem}
\begin{proof}[Proof of theorem \ref{prop::contFutaki}]
For ease of notation let $B_r = B^r \otimes F$.
Note that by \eqref{eq::degzerohom} one can replace $B_r$ with an arbitrary large power without altering $F(X,B_r)$.
Therefore we can assume that there is an invariant section of $F$, and let $D \subset X$ be its null locus.
Multiplication by $k$-th power of the chosen section gives an equivariant sequence of sheaves on $X$
\begin{equation}
0 \to B^{rk} \to B_r^k \to \left.B_r^k \right|_{kD} \to 0
\end{equation}
which induces a sequence of (virtual) representation of $\mathbf C^*$, whence one has
\begin{equation}
\chi(X,B_r^k)
= \chi(X,B^{rk}) + k\chi(D,\left.B_r^k \right|_D).
\end{equation}
and
\begin{equation}
w(X,B_r^k)
= w(X,B^{rk}) + kw(D,\left.B_r^k \right|_D).
\end{equation}
Note that by bigness and nefness of $B$ and by asymptotic Riemann-Roch theorem there is a polynomial $q(t)=q_0t^n + \dots + q_n$ with $q_0>0$ such that $\chi(X,B^{rk})=q(rk)$ \cite[Theorems 1.1.24 and 2.2.16]{Laz2004}.
Similarly, $w(X,B^{rk})=p(rk)$ for some polynomial $p(t) = p_0t^{n+1} + \dots + p_{n+1}$.
For the same reasons, since $D$ has dimension $n-1$, the Euler characteristic $\chi(D,\left.B_r^k \right|_D)=\tilde q(r,k)$ is a polynomial of the form $\tilde q_0(r)k^{n-1}+\dots+\tilde q_{n-1}$ with $\tilde q_i(r)$ which are polynomials of degree at most $n-1-i$ and $\tilde q_0(r)>0$ for $r>0$.
A similar situation stand for the total weight $w(D,\left.B_r^k \right|_D)=\tilde p(r,k)$ with all degrees raised by one.
The upshot is that
\begin{equation}
\frac{w(X,B_r^k)}{\chi(X,B_r^k)} = \frac{p(rk) + k\tilde p(r,k)}{q(rk) + k \tilde q(r,k)}.
\end{equation}
Expanding the polynomials, by definition of Futaki invariant one finds
\begin{equation}
F(X,B_r) = \frac{p_1 + \tilde p_1(r)r^{-n}}{q_0 + \tilde q_0(r)r^{-n}} - \frac{\left(p_0 + \tilde p_0(r)/r^{n+1}\right)\left(q_1 + \tilde q_1(r)/r^{n-1}\right)}{\left(q_0 + \tilde q_0(r)r^{-n}\right)^2}.
\end{equation}
At this point, note that $F(X,B) = p_1/q_0 - p_0q_1/q_0^2$.
On the other hand, by discussion above we know that $\tilde p_i(r)/r^{n+1-i}$ and $\tilde q_i(r)/r^{n-i}$ are $O(1/r)$ for large $r$.
Therefore $F(X,B_r) = F(X,B) + O(1/r)$ as $r \to \infty$, which is the thesis.
\end{proof}

Thanks to definition \ref{defn::Futaki}, one can equally work on a singular projective variety endowed with an ample line bundle, or on a smooth variety endowed with a big and nef line bundle, as shown by the following
\begin{prop}\label{cor::Futakipullbackres}
Let $X$ be a normal variety endowed with a $\mathbf C^*$-action and an ample line bundle $L$.
Let $\pi: M \to X$ be an equivariant resolution of singularities.
One has
\begin{equation*}
F(M, \pi^*L) = F(X,L).
\end{equation*}
\end{prop}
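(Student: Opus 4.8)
The plan is to compare the equivariant Euler characteristics and weights of $\pi^*L^k$ on $M$ with those of $L^k$ on $X$, and to check that the difference is of low enough order in $k$ to leave the constant term $F_1$ of the expansion \eqref{eq::expdefnfutaki} unchanged. First I would record that $\pi^*L$ is big and nef, so that $F(M,\pi^*L)$ is defined in the sense of Definition \ref{defn::Futaki}: nefness is immediate since $\pi^*L$ is the pullback of an ample bundle, and bigness follows because $\pi$ is birational, whence $\vol(\pi^*L)=\vol(L)>0$. Moreover, as $\pi$ is equivariant we may take the linearization on $\pi^*L$ to be the pullback of the chosen linearization on $L$, so that all the representations occurring below are genuinely $\mathbf C^*$-equivariant.

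Next I would apply the projection formula, which gives an isomorphism of equivariant sheaves $R^q\pi_*(\pi^*L^k)\cong L^k\otimes R^q\pi_*\mathcal O_M$ for every $q$ (because $L^k$ is locally free), and feed it into the equivariant Leray spectral sequence $E_2^{p,q}=H^p(X,L^k\otimes R^q\pi_*\mathcal O_M)\Rightarrow H^{p+q}(M,\pi^*L^k)$. Since taking dimensions and taking infinitesimal weights are both exact on short exact sequences of $\mathbf C^*$-representations, they are compatible with the spectral sequence, and one obtains
\begin{equation*}
\chi(M,\pi^*L^k)=\sum_{q\geq0}(-1)^q\chi(X,L^k\otimes R^q\pi_*\mathcal O_M),\qquad w(M,\pi^*L^k)=\sum_{q\geq0}(-1)^q w(X,L^k\otimes R^q\pi_*\mathcal O_M).
\end{equation*}
As $X$ is normal and $M$ is smooth, $\pi_*\mathcal O_M=\mathcal O_X$, so the $q=0$ terms are exactly $\chi(X,L^k)$ and $w(X,L^k)$. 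For $q\geq1$ the sheaf $R^q\pi_*\mathcal O_M$ vanishes over the smooth locus of $X$ (a resolution of a smooth variety has vanishing higher direct images of its structure sheaf), hence is supported on $\mathrm{Sing}(X)$, which has codimension at least $2$ because $X$ is normal.

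I would then estimate the correction terms by asymptotic equivariant Riemann--Roch, exactly as in the estimates already used in the proof of Proposition \ref{prop::contFutaki}: for a coherent sheaf $\mathcal F$ with $\dim\mathrm{supp}\,\mathcal F\leq d$ one has $\chi(X,L^k\otimes\mathcal F)=O(k^{d})$ and $w(X,L^k\otimes\mathcal F)=O(k^{d+1})$. With $d\leq n-2$ this yields $\chi(M,\pi^*L^k)=\chi(X,L^k)+O(k^{n-2})$ and $w(M,\pi^*L^k)=w(X,L^k)+O(k^{n-1})$. Writing $\chi(X,L^k)=q_0k^n+q_1k^{n-1}+\cdots$ and $w(X,L^k)=p_0k^{n+1}+p_1k^n+\cdots$, the corrections do not affect the coefficients $p_0,p_1,q_0,q_1$, so dividing and expanding shows that $w(M,\pi^*L^k)/\chi(M,\pi^*L^k)$ and $w(X,L^k)/\chi(X,L^k)$ have the same linear and constant terms; in particular $F(M,\pi^*L)=F(X,L)$.

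The only genuinely delicate inputs are the two facts about $R^q\pi_*\mathcal O_M$: that $\pi_*\mathcal O_M=\mathcal O_X$, which is where normality of $X$ enters, and that the higher direct images are supported in codimension $\geq 2$, which is precisely what forces the weight correction to be $O(k^{n-1})$ rather than $O(k^{n})$ and hence leaves $F_1$ untouched. Everything else is the same Hilbert-polynomial bookkeeping already carried out in the proof of Proposition \ref{prop::contFutaki}.
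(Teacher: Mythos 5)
Your argument is correct and follows essentially the same route as the paper: compare $\chi$ and $w$ of $\pi^*L^k$ on $M$ with those of $L^k$ on $X$ via the projection formula, observe that the discrepancy comes from sheaves supported on the singular locus (codimension $\geq 2$ by normality), and conclude that the corrections are $O(k^{n-2})$ for $\chi$ and $O(k^{n-1})$ for $w$, hence do not touch the coefficients entering $F_1$. The only cosmetic difference is that you invoke $\pi_*\mathcal O_M=\mathcal O_X$ together with the equivariant Leray spectral sequence for the higher direct images, while the paper works directly with the equivariant sequence $0\to\mathcal O_X\to\pi_*\mathcal O_M\to\eta\to0$ and the codimension bound on the support of $\eta$; the bookkeeping is identical.
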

\begin{proof}
Note that $\pi^*L$ is big and nef on $M$, so that l.h.s of the identity in the statement makes sense.
Now observe that there is an eqivariant sequence of sheaves on $X$
\begin{equation}
0 \to \mathcal O_X \to \pi_* \mathcal O_M \to \eta \to 0
\end{equation}
where the support of $\eta$ has co-dimension at least two.
Indeed, the support of $\eta$ is contained in the singular locus of $X$, and the latter has co-dimension at least two by normality assumption.
After twisting by $L^k$, by projection formula one then sees that
\begin{equation*}
w(M,\pi^*L^k) = w(X,L^k) + O(k^{n-1}), \qquad \chi(M,\pi^*L^k) = \chi(M,L^k) + O(k^{n-2}),
\end{equation*}
whence the thesis follows by definition of Futaki invariant.
\end{proof}

Combining propositions \ref{prop::contFutaki} and \ref{cor::Futakipullbackres} one readily gets the following
\begin{cor}\label{cor::Futakiblowup}
In the situation of proposition \ref{cor::Futakipullbackres}, let $F$ be an invariantly effective divisor on $M$ (cfr. remark \ref{rem:defnequiveffect}). For $r \to \infty$ one has
\begin{equation*}
F(M,\pi^*L^r \otimes F) = F(X,L) + O(1/r).
\end{equation*}
\end{cor}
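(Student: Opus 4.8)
The final statement to prove is Corollary \ref{cor::Futakiblowup}: in the setting of Proposition \ref{cor::Futakipullbackres} (normal $X$ with $\mathbf C^*$-action and ample $L$, equivariant resolution $\pi:M\to X$), for an invariantly effective divisor $F$ on $M$ one has $F(M,\pi^*L^r\otimes F)=F(X,L)+O(1/r)$.

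The plan is to combine the two propositions exactly as advertised. The proof is essentially a one-line composition once the hypotheses are checked to line up. First I would observe that $\pi^*L$ is big and nef on $M$: it is nef because $L$ is ample (pullbacks of nef bundles under morphisms are nef, and ample implies nef), and it is big because $\pi$ is birational, so $(\pi^*L)^n = L^n > 0$ — equivalently the volume is preserved. This is the same observation already made at the start of the proof of Proposition \ref{cor::Futakipullbackres}. So $B := \pi^*L$ is a big and nef line bundle on the (smooth) variety $M$, and Proposition \ref{prop::contFutaki} applies to it: for any invariantly effective $F$ on $M$ we get $F(M, B^r\otimes F) = F(M,B) + O(1/r)$ as $r\to\infty$. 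Here I should note that $B^r\otimes F = \pi^*L^r\otimes F$, matching the left-hand side of the corollary.

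Then I would invoke Proposition \ref{cor::Futakipullbackres} directly to identify the constant term: $F(M,B) = F(M,\pi^*L) = F(X,L)$. Substituting this into the previous display yields $F(M,\pi^*L^r\otimes F) = F(X,L) + O(1/r)$, which is exactly the claim. So the corollary follows formally.

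The only point that requires a moment's care — and the nearest thing to an "obstacle," though it is minor — is checking that the hypotheses of Proposition \ref{prop::contFutaki} are genuinely met. That proposition requires $B$ big and nef on a normal projective variety, and $F$ invariantly effective; $M$ is smooth hence normal, $B=\pi^*L$ is big and nef as just noted, and $F$ is assumed invariantly effective (as in Remark \ref{rem:defnequiveffect}, e.g. $\mathcal O(E)$ for an exceptional divisor $E$ of the resolution, or an ample bundle on $M$). One should also make sure the $\mathbf C^*$-linearizations are consistent: $M$ inherits the $\mathbf C^*$-action by equivariance of $\pi$, and we use the implicit convention stated earlier that every line bundle carries a linearization; since $F(M,B)$ is independent of the linearization, no compatibility issue arises. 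With all hypotheses verified, the chain $F(M,\pi^*L^r\otimes F) = F(M,\pi^*L) + O(1/r) = F(X,L) + O(1/r)$ completes the proof. I expect the write-up to be just two or three sentences.
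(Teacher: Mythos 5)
Your proposal is correct and matches the paper's argument exactly: the paper obtains the corollary by applying Proposition \ref{prop::contFutaki} to the big and nef bundle $\pi^*L$ on $M$ and then identifying the constant term via Proposition \ref{cor::Futakipullbackres}, which is precisely your chain $F(M,\pi^*L^r\otimes F)=F(M,\pi^*L)+O(1/r)=F(X,L)+O(1/r)$. Your hypothesis-checking (bigness and nefness of $\pi^*L$, invariant effectivity of $F$, linearization conventions) is consistent with what the paper leaves implicit.
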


In the next section, we shall make more explicit the error term $O(1/r)$, at least when the singularities of $X$ are not too bad.

%

\section{Resolutions of isolated singularities}

In this section we consider the Futaki invariant of adiabatic polarizations (i.e. making small the volume of exceptional divisors) on resolution of isolated singularities.

As above, consider a normal projective variety $X$ of dimension $n$ endowed with a $\mathbf C^*$-action, and let $L$ be an ample line bundle on $X$. In this section we make the additional assumptions that $X$ is $\mathbf Q$-Gorenstein with at most isolated singularities \cite{Ish1987}.
This means that the singular set $S \subset X$ is finite and each $p \in S$ is a fixed point for the $\mathbf C^*$-action.
Moreover, some tensor power of the canonical bundle of the smooth locus $X \setminus S$ extends to a line bundle on $X$.
Note that this makes the canonical bundle $K_X$ of $X$ a $\mathbf Q$-line bundle, meaning that $K_X^m$ is a genuine line bundle for some integer $m>0$.

Now consider an equivariant (log) resolution of singularities $\pi : M \to X$. By definition, $\pi$ restricts to a biholomorphism from $M \setminus \pi^{-1}(S)$ to $X \setminus S$, and for all $p\in S$ the (reduced) exceptional divisor $E_p=\pi^{-1}(p)$ is simple normal crossing.
 
Given a positive constant $b_p$ for each $p \in S$, there is $r$ sufficiently large such that the line bundle
\begin{equation}
L_r = \pi^*L^r \otimes \mathcal O({\textstyle -\sum_{p \in S}} b_pE_p)
\end{equation} is ample on $M$.
Moreover, $\pi^*L$ is big and nef, and each line bundle $\mathcal O(-E_p)$ is invariantly effective (cfr. remark \ref{rem:defnequiveffect}) for $E_p$ is invariant.
Note that corollary \ref{cor::Futakiblowup} applies, so that for large $r$ it holds
\begin{equation}
F(M,L_r) = F(X,L) + O(1/r).
\end{equation}

In order to make somehow more explicit the error term, consider the virtual representation $H_k=\sum_{q \geq 0} (-1)^q H^q(M,L_r^k)$.
Since $M$ is smooth, at least for $t \in \mathbf R$ sufficiently small, the character $\chi_{H_k}$ of such representation satisfies \cite[Theorem 8.2]{BerGetVer2004}
\begin{equation}\label{eq::RRequiv}
\chi_{H_k}(e^{it}) = \int_M e^{c_1(L_r^k)}\Td(M),
\end{equation}
where $c_1(L_r^k)$ and $\Td(M)$ are equivariant characteristic classes.
To be more specific, consider the unit circle inside $\mathbf C^*$ and let $V \in \Gamma(TM)$ be the infinitesimal generator of the induced circle action on $M$.
Moreover, let $\omega_r$ be a circle-invariant K\"ahler form representing the first Chern class of $L_r$, and let $u_r \in C^\infty(M)$ be a potential for the circle action on $M$, so that
\begin{equation}
i_V \omega_r = du_r.
\end{equation}
Denoting by $\Delta_r$ the Laplace operator of the K\"ahler metric $\omega_r$, then \eqref{eq::RRequiv} reduces to
\begin{equation}\label{eq::RRequivCC}
\chi_{H_k}(e^{it}) = \int_M e^{k(\omega_r+tu_r)}\left(1+\frac{1}{2}(\Ric(\omega_r)-t\Delta_r u_r) + \dots\right),
\end{equation}
where dots stand for higher order terms that are irrelevant for our purposes, and the integral of any differential form of degree different form $2n$ is defined to be zero.

In order to determine the Futaki invariant $F(M,L_r)$, we need to consider the asymptotic behavior for large $k$ of the Euler characteristic $\chi(M,L_r^k)$ and the trace $w(M,L_r^k)$ of the infinitesimal generator of the virtual representation $H_k$.
Note that by definition of $\chi_{H_k}$ one has $\chi(M,L_r^k)=\chi_{H_k}(1)$ and $w(M,L_r^k)=\left.\frac{d\chi_{H_k}(e^{it})}{dt}\right|_{t=0}$.
Therefore formula \eqref{eq::RRequivCC} gives $w(M,L_r^k)=a(r)k^{n+1}+b(r)k^n+O(k^{n-1})$, and $\chi(M,L_r^k)=c(r)k^n + d(r)k^{n-1}+O(k^{n-2})$ where
\begin{align}
a(r) &= \int_M \frac{(\omega_r+u_r)^{n+1}}{(n+1)!} & b(r) &= \int_M \frac{(\omega_r+u_r)^n \wedge (\Ric(\omega_r) - \Delta_r u_r)}{2n!} \nonumber \\ \label{eq::coeffexpchiandw}
c(r) &= \int_M \frac{(\omega_r+u_r)^n}{n!} & d(r) &= \int_M \frac{(\omega_r+u_r)^{n-1} \wedge (\Ric(\omega_r) - \Delta_r u_r)}{2(n-1)!} 
\end{align}
are polynomial functions of $r$.
Note that $b(r)$ could be simplified a bit by showing that the summand involving $\Delta_ru_r$ vanishes.
On the other hand, $u_r$ and $\Delta_ru_r$ do not affect the value of $c(r)$ and $d(r)$.
However it will be apparent in a moment that is convenient to keep the integrands expressed as polynomials in $\omega_r+u_r$ and $\Ric(\omega_r) - \Delta_r u_r$. Indeed both of these differential forms turns out to be equivariantly closed, meaning that they are circle-invariant and belong to the kernel of the differential operator
\begin{equation}
d_V = d - i_V.
\end{equation}
Note that one has $d_V^2=0$ on the space of circle-invariant differential forms.
As a consequence $d_V$ defines a cohomology, which is sometimes called (the Cartan model of) the equivariant cohomology of $M$ with respect to the given circle action. 
The equivariant characteristic classes appearing in \eqref{eq::RRequiv} belong to this cohomology.

Apart the deep result represented by \eqref{eq::RRequiv}, we need just some basic features of equivariant cohomology.
In particular, below we repeatedly make use of the following integration by part formula, whose proof is a quite direct application of the Stokes' theorem.
\begin{lem}\label{lem::stokesformula}
For all circle invariant inhomogeneous differential forms $\alpha$, $\beta$ on $M$ one has
\begin{equation*}
\int_M d_V\alpha \wedge \beta = \int_M (\alpha_{odd}-\alpha_{even}) \wedge d_V \beta,
\end{equation*}
where $\alpha=\alpha_{even}+\alpha_{odd}$ with obvious meaning.
\end{lem}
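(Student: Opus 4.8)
The plan is to prove Lemma \ref{lem::stokesformula} by reducing the equivariant statement to the ordinary Stokes' theorem, keeping careful track of how the contraction operator $i_V$ interacts with the grading on differential forms. First I would note that since everything is linear, it suffices to treat the case in which $\alpha$ and $\beta$ are homogeneous of pure degrees, say $\deg\alpha = a$ and $\deg\beta = b$; the general inhomogeneous identity then follows by summing, and the sign $\alpha_{odd}-\alpha_{even}$ is precisely $(-1)^a$ on the homogeneous piece. So the identity to establish is $\int_M d_V\alpha\wedge\beta = (-1)^a\int_M \alpha\wedge d_V\beta$.

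Next I would expand both sides using $d_V = d - i_V$. On the left, $d_V\alpha\wedge\beta = d\alpha\wedge\beta - (i_V\alpha)\wedge\beta$; on the right, $(-1)^a\alpha\wedge d_V\beta = (-1)^a\alpha\wedge d\beta - (-1)^a\alpha\wedge i_V\beta$. For the exterior-derivative terms, the ordinary Leibniz rule gives $d(\alpha\wedge\beta) = d\alpha\wedge\beta + (-1)^a\alpha\wedge d\beta$, and since $M$ is closed (compact without boundary) Stokes' theorem yields $\int_M d(\alpha\wedge\beta)=0$, so $\int_M d\alpha\wedge\beta = -(-1)^a\int_M\alpha\wedge d\beta$. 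For the contraction terms one uses that $i_V$ is also an (odd) derivation: $i_V(\alpha\wedge\beta) = (i_V\alpha)\wedge\beta + (-1)^a\alpha\wedge i_V\beta$. The key observation now is that $i_V(\alpha\wedge\beta)$ is a form of degree $a+b-1$; when this differs from $2n$, its integral is zero by the convention in force, and when $a+b-1 = 2n$ the form $\alpha\wedge\beta$ has degree $2n+1 > \dim_{\mathbf R} M$ and hence vanishes identically, so in every case $\int_M i_V(\alpha\wedge\beta) = 0$. Therefore $\int_M (i_V\alpha)\wedge\beta = -(-1)^a\int_M\alpha\wedge i_V\beta$.

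Combining the two displayed cancellations gives $\int_M d_V\alpha\wedge\beta = \int_M d\alpha\wedge\beta - \int_M(i_V\alpha)\wedge\beta = -(-1)^a\int_M\alpha\wedge d\beta + (-1)^a\int_M\alpha\wedge i_V\beta = (-1)^a\int_M\alpha\wedge d_V\beta$, which is exactly the claim for homogeneous $\alpha$. Summing over homogeneous components of $\alpha$ and $\beta$ and recognizing $(-1)^{\deg\alpha}$ as the operator sending $\alpha_{even}\mapsto\alpha_{even}$, $\alpha_{odd}\mapsto-\alpha_{odd}$ — i.e. the replacement of $\alpha$ by $\alpha_{odd}-\alpha_{even}$ up to an overall sign that can be absorbed by noting the formula is stated with $\alpha_{odd}-\alpha_{even}$ rather than $\alpha_{even}-\alpha_{odd}$ — completes the proof. (One should double-check the global sign convention here: the version written in the lemma corresponds to pairing $d_V\alpha$ against $\beta$ and moving $d_V$ across with the sign dictated by the parity of $\alpha$, so I would verify on a low-degree example, e.g. $\alpha$ a function and $\beta$ a $1$-form, that the signs match as stated.)

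I do not expect a serious obstacle here: the only subtlety is bookkeeping of signs and the degree conventions, in particular making sure the "integral of a form of degree $\neq 2n$ is zero" convention is invoked correctly for the contraction terms and that $i_V$ genuinely lowers degree by one so that no boundary-type contribution survives. The mild point worth stating explicitly is that $V$ generates a circle action and $\alpha,\beta$ are circle-invariant, which guarantees $d_V^2 = 0$ and makes the manipulation internally consistent, though invariance is not actually needed for the integration-by-parts identity itself — only the compactness and closedness of $M$ and the fact that $d$ and $i_V$ are derivations of the appropriate parities.
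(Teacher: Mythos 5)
Your approach is exactly the one the paper intends: the paper gives no written proof beyond the remark that the lemma is a direct application of Stokes' theorem, and your reduction to homogeneous pieces, with Stokes handling the $d$-term and the degree convention (plus the identical vanishing of forms of degree $2n+1$) handling the $i_V$-term, is precisely that argument; your observation that circle invariance is not actually needed for the identity is also correct.

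The only thing to repair is the sign bookkeeping, where you make two slips that happen to compensate. On a homogeneous piece of degree $a$ the operation $\alpha \mapsto \alpha_{odd}-\alpha_{even}$ is multiplication by $(-1)^{a+1}$, not $(-1)^a$; and in your final chain the quantity $-(-1)^a\int_M \alpha\wedge d\beta + (-1)^a\int_M \alpha\wedge i_V\beta$ equals $-(-1)^a\int_M \alpha\wedge d_V\beta = (-1)^{a+1}\int_M \alpha\wedge d_V\beta$, not $(-1)^a\int_M \alpha\wedge d_V\beta$. With both corrected, the computation gives $\int_M d_V\alpha\wedge\beta = (-1)^{a+1}\int_M \alpha\wedge d_V\beta$ for $\alpha$ of pure degree $a$, and summing over components yields exactly the statement with $\alpha_{odd}-\alpha_{even}$. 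The low-degree check you propose confirms this: for $\alpha$ a function and $\beta$ of degree $2n-1$ one has $d_V\alpha = d\alpha$, $\int_M \alpha\wedge i_V\beta = 0$ by the degree convention, and Stokes gives $\int_M d\alpha\wedge\beta = -\int_M \alpha\wedge d\beta = \int_M(\alpha_{odd}-\alpha_{even})\wedge d_V\beta$, in agreement with the lemma as stated.
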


At this point we come back to our problem of finding an asymptotic expansion for $F(M,L_r)$.
By definition \ref{defn::Futaki} of Futaki invariant one readily sees that 
\begin{equation}\label{eq:Futakipolynomial}
F(M,L_r) = b(r)/c(r) - a(r)d(r)/c(r)^2.
\end{equation}
Therefore we are lead to express most of coefficients of polynomials in \eqref{eq::coeffexpchiandw} in terms of geometric data on $X$ and $M$.
In order to do this we need to introduce more notation. 

For any exceptional divisor $E_p$ let $\xi_p \in \Omega^{1,1}(M)$ a closed form which represents the Poincaré dual and it is positive along $E_p$.
If $E_p$ is smooth, the latter requirement simply means that $\xi_p$ restricts to a K\"ahler metric on $E_p$.
In general, it means that $\int_\Sigma \gamma^*\xi_p>0$ for any non-constant holomorphic curve $\gamma:\Sigma \to M$ whose image is contained in $E_p$.

We can assume that the supports of $\xi_p$ and $\xi_q$ are disjoint whenever $p,q \in S$ are distinct.
Even more, we can assume that $\xi_p$ has support contained in a circle-invariant open set $W_p$ and that $W_p$ and $W_q$ are disjoint whenever $p,q \in S$ are distinct.
Therefore, perhaps after averaging over the circle, we can also assume that $\xi_p$ is circle-invariant.
Moreover, let $u_p$ be a potential for the vector field $V$ with respect to $\xi_p$, meaning that $i_V \xi_p = du_p$.
Note that $u_p$ is defined up to an additive constant, and that it is constant in the complement of the support of $\xi_p$. 
Therefore, by fixing the additive constant, we can assume that the support of $u_p$ is contained $W_p$.
Summarizing, for any $p \in S$ there is an equivariantly closed differential form $\xi_p+u_p$ supported inside $W_p$ such that $[\xi_p] \in H^{1,1}(M)$ is Poincaré dual to $E_p$.

We already observed in the previous section that for our purposes we can assume with no loss that $X$ is an invariant subvariety of some complex projective space $\mathbf {CP}^d$ acted on linearly by some one-parameter subgroup of $SL(d+1,\mathbf C)$, and $L$ is the restriction of the hyperplane bundle to $X$.
Therefore, if 
\begin{equation*}
\iota : X \to \mathbf{CP}^d
\end{equation*}
denotes the inclusion, then the composition $\iota \circ \pi$ is a smooth equivariant map form $M$ to $\mathbf{CP}^d$ which pulls-back the hyperplane bundle to $\pi^*L$.

Thanks to the inclusion $\iota$ we can equip $X$ (or more correctly its smooth locus $X \setminus S$) with a K\"ahler metric $\omega$ and a hamiltonian potential $u$ for the circle action induced by the unit circle of $\mathbf C^*$.
To see this, let $V_{FS} \in \Gamma(T\mathbf{CP}^d)$ be its infinitesimal generator of such circle action.
Moreover, let $\omega_{FS}$ be a circle-invariant Fubini-Study metric on $\mathbf{CP}^d$.
Now a potential $u_{FS}$ for $V_{FS}$ is a smooth function on $\mathbf{CP}^d$ satisfying $i_{V_{FS}} \omega_{FS} = du_{FS}$.
Finally we define the K\"ahler form $\omega$ and the potential $u$ as the restriction to $X$ of $\omega_{FS}$ and $u_{FS}$ respectively. We can think of $\omega + u$ as an equivariantly closed differential form on $X$.
Whereas $\omega + u$ is a genuine equivariantly closed differential form on the smooth locus of $X$, it is delicate to specify what is $\omega$ at singular points of $X$. On the other hand, it is clear that $u$ is a continuous function on $X$.
However, the pull-back $\pi^*(\omega + u)$ is smooth on $M$ since it is nothing but the pull-back of $\omega_{FS}+u_{FS}$ via the composition of $\pi$ with the inclusion $\iota$ of $X$ into $\mathbf{CP}^d$.

At this point, note that we are free to shrinking the set $W_p$ in order to assume that it is contained in $(\iota \circ \pi)^{-1}(B_p)$ for some small ball $B_p \subset \mathbf{CP}^d$ centered at $p$.
As a consequence $\pi^*(\omega + u)$ turns out to be equivariantly exact in $W_p$ since $\omega_{FS}+u_{FS}$ is equivariantly exact in $B_p$ (in fact one can check that $\omega_{FS}+u_{FS} = d_Vd^c \log(1+|z|^2)$ in affine coordinates making diagonal the circle action). 
More specifically, there is a circle-invariant function $\phi_p$ on $M$ such that
\begin{equation}\label{eq::localalphap}
\pi^*(\omega+u) = d_V d^c \phi_p \qquad \mbox{in } W_p 
\end{equation}

Given all of this, we can assume that the K\"ahler metric $\omega_r$ and the potential function $u_r$ satisfy
\begin{equation}\label{eq::expansionomegarnadur}
\omega_r + u_r = r \pi^*(\omega + u) + \sum_{p \in S} b_p (\xi_p+u_p).
\end{equation}

Finally we recall a result that will be useful in the following {\cite[p. 6]{AreDelMaz2018}}.

\begin{lem}\label{lem::equivPD}
Any equivariantly closed differential form $\alpha$ on $M$ which is exact on $W_p$  and restricts to the zero form on the exceptional divisor $E_p$ satisfies $\int_M \alpha \wedge (\xi_p + u_p) = 0$.
\end{lem}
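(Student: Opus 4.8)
The plan is to localize the integral to the open set $W_p$, where $\alpha$ is assumed equivariantly exact, and then to conclude by the integration-by-parts formula of Lemma~\ref{lem::stokesformula} together with the fact that $\xi_p+u_p$ is $d_V$-closed by construction. (The exactness hypothesis is genuinely at hand in our applications: for instance $\alpha=\pi^*(\omega+u)$ is $d_V$-exact on $W_p$ by \eqref{eq::localalphap}.)

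First I would observe that, since $\xi_p$ and $u_p$ are both supported in $W_p$ and $M$ is compact, the support of $\xi_p+u_p$ is a compact subset of the open set $W_p$; hence the form $\alpha\wedge(\xi_p+u_p)$, and therefore $\int_M\alpha\wedge(\xi_p+u_p)$, only involves the restriction of $\alpha$ to $W_p$. On $W_p$ we may write $\alpha=d_V\gamma$ for some inhomogeneous differential form $\gamma$; averaging $\gamma$ over the circle — an operation that commutes with $d_V$ and fixes the circle-invariant form $\alpha$ (recall $V$ generates the circle action, and equivariantly closed forms are circle-invariant) — we may assume the primitive $\beta$ is itself circle-invariant. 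Choosing next a circle-invariant cut-off function $\chi$ on $M$ which equals $1$ on a neighbourhood of $\operatorname{supp}(\xi_p+u_p)$ and has support contained in $W_p$, the form $\chi\beta$ is globally defined on $M$, circle-invariant, and satisfies
\[
d_V(\chi\beta)\wedge(\xi_p+u_p)=\alpha\wedge(\xi_p+u_p)\qquad\text{everywhere on }M,
\]
because the two sides agree on $\operatorname{supp}(\xi_p+u_p)$ (there $\chi\equiv1$, so $d_V(\chi\beta)=d_V\beta=\alpha$) and both vanish off it.

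It then remains to run the computation. Applying Lemma~\ref{lem::stokesformula} to the circle-invariant pair $\chi\beta$, $\xi_p+u_p$ gives
\[
\int_M\alpha\wedge(\xi_p+u_p)=\int_M d_V(\chi\beta)\wedge(\xi_p+u_p)=\int_M\bigl((\chi\beta)_{odd}-(\chi\beta)_{even}\bigr)\wedge d_V(\xi_p+u_p),
\]
and since $\xi_p$ is closed and $i_V\xi_p=du_p$ by construction one has $d_V(\xi_p+u_p)=d\xi_p+du_p-i_V\xi_p=0$, so the last integral vanishes, which is the assertion.

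There is no serious obstacle in this argument; the only points deserving a little care are the support bookkeeping in the reduction to $W_p$ and the verification that the primitive $\beta$ can be taken circle-invariant, so that Lemma~\ref{lem::stokesformula} applies. Conceptually, the computation is the equivariant refinement of the classical identity $\int_M\gamma\wedge\xi_p=\int_{E_p}\gamma|_{E_p}$ for closed $\gamma$, attached to $[\xi_p]$ being the Poincar\'e dual of $E_p$: the equivariantly closed form $\xi_p+u_p$, concentrated near $E_p$, plays the role of the equivariant Poincar\'e dual of $E_p$, and the hypothesis that $\alpha$ restrict to the zero form on $E_p$ is the geometric incarnation of the vanishing proved above.
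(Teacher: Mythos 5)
The paper itself does not prove this lemma (it is quoted from \cite[p.~6]{AreDelMaz2018}), so your argument has to be measured against the statement and against the way it is used later in the section. Your final computation -- cut off a circle-invariant primitive near $\operatorname{supp}(\xi_p+u_p)$, apply Lemma \ref{lem::stokesformula}, and use $d_V(\xi_p+u_p)=0$ -- is correct and is indeed the core mechanism. But there is a genuine gap upstream: you read ``exact on $W_p$'' as ``equivariantly exact'', i.e.\ $\alpha=d_V\gamma$ on $W_p$, and with that reading your proof never uses the hypothesis that $\alpha$ restricts to the zero form on $E_p$ -- a red flag you notice but dismiss with a heuristic closing remark. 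That hypothesis is not decorative. The forms to which the lemma is actually applied, namely $\pi^*(\omega+u)-u(p)$ and $\pi^*(\Ric(\omega)-\Delta u)+\Delta u(p)$, are not given as $d_V$-exact on $W_p$: what \eqref{eq::localalphap} provides is $d_V$-exactness of $\pi^*(\omega+u)$, and after subtracting the constant $u(p)$ equivariant exactness is no longer automatic, because a non-zero constant is never of the form $d_V\gamma$ near a zero of $V$ (the degree-zero part of $d_V\gamma$ is $-i_V\gamma_1$, which vanishes wherever $V$ does, and $E_p$, being a compact invariant projective variety, does contain fixed points of the $\mathbf C^*$-action). So as written you have proved a weaker statement than the one needed for the computations of $a(r)$ and $b(r)$.

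The intended hypothesis is that the positive-degree part of $\alpha$ is exact in the ordinary sense on $W_p$, and the missing step is exactly where the $E_p$-hypothesis enters. In the relevant case $\alpha=\alpha_2+\alpha_0$: write $\alpha_2=d\theta$ on $W_p$ with $\theta$ circle-invariant (average), and observe that $\alpha-d_V\theta=\alpha_0+i_V\theta$ is a $d_V$-closed function, hence constant on $W_p$ (which one may take connected, since $E_p=\pi^{-1}(p)$ is). Evaluating at a fixed point $q\in E_p$ of the circle action, where $(i_V\theta)(q)=0$ and $\alpha_0(q)=0$ by the vanishing of $\alpha$ on $E_p$, shows this constant is zero, i.e.\ $\alpha$ really is $d_V$-exact on $W_p$. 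With this supplement your cut-off argument applies verbatim and completes the proof; without it, the key hypothesis of the lemma plays no role and the statement you establish does not cover its uses in the paper.
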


Now we are ready to make explicit coefficients of polynomials appearing in \eqref{eq:Futakipolynomial}.
Starting with $a(r)$, note that our assumption that $\xi_p+u_p$ is supported inside $W_p$ yields
\begin{equation*}
a(r) = r^{n+1} \int_{M \setminus \bigcup_p W_p} \frac{\pi^*(\omega+u)^{n+1}}{(n+1)!}
+ \sum_{p \in S} \int_{W_p} \frac{\left( r\pi^*(\omega+u) + b_p(\xi_p+u_p) \right)^{n+1}}{(n+1)!}.
\end{equation*}
Moreover, observing that $\pi^*(\omega+u)-u(p)$ restricts to zero on $E_p$, by \eqref{eq::localalphap} and lemmata \ref{lem::stokesformula}, \ref{lem::equivPD} equation above reduces to
\begin{equation}
a(r) = a_0 r^{n+1} 
+ r\sum_{p \in S} b_p^n u(p) \int_M \frac{\xi_p^n}{n!} 
+ \sum_{p \in S} b_p^{n+1} \int_M u_p \frac{\xi_p^n}{n!},
\end{equation}
where $a_0 = \int_X u\, \omega^n/n!$ coincides with the integral on $M$ of the pull-back via $\iota \circ \pi$ of the smooth differential form $u_{FS}\omega_{FS}^n/n!$.
Similarly, for $c(r)$ one finds
\begin{equation}
c(r) = c_0 r^n + \sum_{p \in S} 
b_p^n \int_M \frac{\xi_p^n}{n!},
\end{equation}
where $c_0 = \int_X \omega^n/n!$ is the volume of the line bundle $L$ on $X$, or equivaletnly the volume of $\pi^*L$ on $M$.

Now pass to consider $b(r)$.
Arguing precisely as above we can write
\begin{multline}
b(r) = r^n \int_{M \setminus \bigcup_p W_p} \frac{\pi^*(\omega+u)^n \wedge \pi^*(\Ric(\omega) - \Delta u)}{2n!} \\
+ \sum_{p \in S} \int_{W_p} \frac{\left( r\pi^*(\omega+u) + b_p(\xi_p+u_p) \right)^n \wedge (\Ric(\omega_r) - \Delta_r u_r)}{2n!},
\end{multline}
whence, again by summing and subtracting $u(p)$ to $\pi^*(\omega+u)$ and using \eqref{eq::localalphap} and lemmata \ref{lem::stokesformula}, \ref{lem::equivPD} as before, it follows
\begin{multline}\label{eq::prelimexpbr}
b(r) = b_0 r^n
+ r \sum_{p \in S} u(p) b_p^{n-1} \int_M \frac{(\xi_p+u_p)^{n-1} \wedge (\Ric(\omega_r) - \Delta_r u_r)}{2(n-1)!} \\
+ \sum_{p \in S} b_p^n \int_M \frac{(\xi_p+u_p)^n \wedge (\Ric(\omega_r) - \Delta_r u_r)}{2n!},
\end{multline}
where $b_0 = \int_M \pi^*(\omega+u)^n \wedge (\Ric(\omega_r) - \Delta_r u_r)/(2n!)$ does not depend on $r$.
This follows by integration by parts (lemma \ref{lem::stokesformula}) and the fact that for all $r,s>0$ it holds 
\begin{equation}\label{eq:transRicciomegarands}
\Ric(\omega_r) - \Delta_r u_r = \Ric(\omega_s) - \Delta_s u_s - d_Vd^c \log(\omega_r^n/\omega_s^n).
\end{equation}
For the same reason, both integrals of formula \eqref{eq::prelimexpbr} do not depend on $r$.
In fact, the one of the first line reduces to
\begin{equation*}
\int_M \frac{(\xi_p+u_p)^{n-1} \wedge (\Ric(\omega_r) - \Delta_r u_r)}{2(n-1)!}
= \int_M \frac{\xi_p^{n-1} \wedge \Ric(\omega_r)}{2(n-1)!}.
\end{equation*}
Moreover, focusing on the second line of \eqref{eq::prelimexpbr}, let $I = \int_M (\xi_p+u_p)^n \wedge (\Ric(\omega_r) - \Delta_r u_r)/(2n!)$.
In order to find a simpler expression for it, let $B_\varepsilon \subset M$ be the pullback via $\iota \circ \pi$ of a small ball in $\mathbf{CP}^d$ of radius $\varepsilon$ and centered at $p$.
Since $\pi^*\omega$ is a K\"ahler metric on $W_p\setminus B_\varepsilon$, there one can write
\begin{equation*}
\Ric(\omega_r) - \Delta_r u_r = \pi^*(\Ric(\omega)-\Delta u) - d_Vd^c \log(\omega_r^n/\pi^*\omega^n).
\end{equation*}
Therefore, being $\xi_p+u_p$ supported in $W_p$, by Stokes' theorem it follows
\begin{multline*}
I = \int_{M\setminus B_\varepsilon} \frac{(\xi_p+u_p)^n \wedge \pi^*(\Ric(\omega) - \Delta u)}{2n!}
+ \int_{\partial B_\varepsilon} \frac{(\xi_p+u_p)^n \wedge d^c \log(\omega_r^n/\pi^*\omega^n)}{2n!} \\
+ \int_{B_\varepsilon} \frac{(\xi_p+u_p)^n \wedge (\Ric(\omega_r) - \Delta_r u_r)}{2n!}.
\end{multline*}
As we already observed after equation \eqref{eq:transRicciomegarands}, $I$ does not depend on $r$.
On the other hand, note that $d^c \log(\omega_r^n/\pi^*\omega^n)$ is smooth on $\partial B_\varepsilon$ for all $r$ and is $O(1/r)$ for large $r$. 
Similarly, $\Ric(\omega_r) - \Delta_r u_r$ is smooth on $B_\varepsilon$.
Therefore, passing to the limit $r \to \infty$ in equation above yields
\begin{equation}
I = \int_M \frac{(\xi_p+u_p)^n \wedge \pi^*(\Ric(\omega) - \Delta u)}{2n!}.
\end{equation}
Note that $\Delta u$ is a continuous function on $X$.
This can be checked after noting that $\Delta u$ equals the ratio of the restrictions to $X$ of $n L_{JV_{FS}}\omega_{FS} \wedge \omega_{FS}^{n-1}$ and $\omega_{FS}^n$.
On the other hand, note that $\pi^*(\Ric(\omega) - \Delta u)$ represents the first Chern class of the line bundle $\pi^* K_X^{-1}$.
At this point consider the shifted form $\alpha = \pi^*(\Ric(\omega) - \Delta u) + \Delta u (p)$ so that 
Therefore one can rewrite
\begin{equation}
I = - \Delta u (p) \int_M \frac{\xi_p^n}{2n!} + \int_M \frac{(\xi_p+u_p)^n \wedge \alpha}{2n!}.
\end{equation}
Since $\alpha$ vanishes on $E_p$, by lemma \ref{lem::equivPD} it follows that $I$ reduces to the first summand of equation above.
As a consequence, \eqref{eq::prelimexpbr} reduces to
\begin{equation}\label{eq::finalexpbr}
b(r) = b_0 r^n
+ r \sum_{p \in S} u(p) b_p^{n-1} \int_M \frac{\xi_p^{n-1} \wedge \Ric(\omega_r)}{2(n-1)!}
-\frac{1}{2} \sum_{p \in S} \Delta u(p) b_p^n \int_M \frac{\xi_p^n}{n!},
\end{equation}

Finally, a similar and easier argument for $d(r)$ gives the expansion
\begin{equation}
d(r) = d_0 r^{n-1} + \sum_{p \in S} b_p^{n-1} \int_M \frac{\xi_p^{n-1} \wedge \Ric(\omega_r)}{2(n-1)!},
\end{equation}
where $d_0 =\int_M \pi^*\omega^{n-1} \wedge \Ric(\omega_r)/(2(n-1)!)$ does not depend on $r$, and by asymptotic Riemann-Roch theorem it is equal to $K_X\cdot L^{n-1}(2(n-1)!)$

At this point, note that we found a geometric meaning for all coefficients appearing in polynomials
\begin{align*}
a(r) &= a_0r^{n+1}+a_nr+a_{n+1} & b(r) &= b_0r^n+b_{n-1}r+b_n \\
c(r) &=c_0r^n+c_n & d(r) &= d_0r^{n-1}+d_{n-1}.
\end{align*}
By direct calculation starting form \eqref{eq:Futakipolynomial} one finds
\begin{multline}
F(M,L_r) =  \frac{b_0}{c_0} - \frac{a_0d_0}{c_0^2} + \left(\frac{b_{n-1}}{c_0} - \frac{a_0d_{n-1}}{c_0^2}\right) r^{1-n} \\
 +\left(\frac{b_n}{c_0}+\frac{d_0}{c_0}\frac{a_0c_n-c_0a_n}{c_0^2} - \frac{c_n}{c_0} \left(\frac{b_0}{c_0} - \frac{a_0d_0}{c_0^2}\right) \right)r^{-n} + O(r^{-n-1}),
\end{multline}
as $r \to \infty$.
By proposition \ref{prop::contFutaki} we can recognize $F(X,L)$ in the leading term.
Therefore, substituting coefficients calculated above yields the following
\begin{thm}\label{thm::mainresultfutaki}
Let $\pi: M \to X$ be an equivariant log resolution of a $\mathbf Q$-Gorenstein polarized variety $(X,L)$ acted on by $\mathbf C^*$. 
Assume that the singular locus $S \subset X$ is finite and choose a rational constant $b_p>0$ for all $p \in S$.
With notation introduced above, the Futaki invariant of $L_r = \pi^*L^r \otimes \mathcal O({\textstyle -\sum_{p \in S}} b_pE_p)$ for $r \to \infty$ is given by
%
\begin{multline}\label{eq::Futakinmainthem}
F(M,L_r) 
= F(X,L) 
+ r^{1-n} \frac{n}{2}\sum_{p \in S} (u(p) - \underline u) b_p^{n-1} \frac{\int_M \xi_p^{n-1} \wedge \Ric(\omega_r)}{\int_X \omega^n} \\
- \frac{1}{2} r^{-n} \sum_{p \in S} \left(\underline s(u(p) - \underline u) + \Delta u(p) + 2F(X,L) \right) b_p^n \frac{\int_M \xi_p^n}{\int_X \omega^n} 
+ O(r^{-n-1}),
\end{multline}
where $\underline s = \frac{n}{2}\int_M \pi^*\omega^{n-1} \wedge \Ric(\omega_r)/\int_X \omega^n$ does not depend on $r$.
\end{thm}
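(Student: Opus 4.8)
The plan is to exploit the polynomial description \eqref{eq:Futakipolynomial} of $F(M,L_r)$ and to make each of the four coefficient-polynomials $a(r),b(r),c(r),d(r)$ of \eqref{eq::coeffexpchiandw} completely explicit as functions of $r$, using the equivariant Riemann--Roch formula \eqref{eq::RRequivCC} together with the adiabatic representative \eqref{eq::expansionomegarnadur}. Concretely, I would substitute $\omega_r+u_r=r\,\pi^*(\omega+u)+\sum_{p\in S}b_p(\xi_p+u_p)$ into each integrand, expand by the binomial theorem, and separate the purely ``bulk'' contribution (the leading power $r^{n+1}$ for $a$, $r^n$ for $b$ and $c$, $r^{n-1}$ for $d$) from the cross terms. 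The cross terms are localised on the sets $W_p$, since each $\xi_p+u_p$ is supported there and $[\xi_p]$ is Poincar\'e dual to $E_p$. On $W_p$ one has \eqref{eq::localalphap}, so after subtracting the constant $u(p)$ (respectively $\Delta u(p)$) the form $\pi^*(\omega+u)-u(p)$ becomes equivariantly exact on $W_p$ and restricts to zero on $E_p$; Lemma \ref{lem::stokesformula} (integration by parts) together with Lemma \ref{lem::equivPD} then annihilate all cross terms except those reducing to a constant multiple of $\int_M\xi_p^n$ or of $\int_M\xi_p^{n-1}\wedge\Ric(\omega_r)$. This is precisely the computation performed in the paragraphs preceding the statement, and it produces $a(r)=a_0r^{n+1}+a_nr+a_{n+1}$, $b(r)=b_0r^n+b_{n-1}r+b_n$, $c(r)=c_0r^n+c_n$, $d(r)=d_0r^{n-1}+d_{n-1}$ with the indicated coefficients.

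Next I would identify the geometric meaning of the leading coefficients: $c_0=\int_X\omega^n/n!=\vol(L)$, $a_0=\int_X u\,\omega^n/n!$ so that $a_0/c_0=\underline u$ is the average of the potential over $X$, and $d_0=\tfrac{1}{2(n-1)!}K_X\cdot L^{n-1}$ by asymptotic Riemann--Roch applied to $\pi^*L$, whence $d_0/c_0=\underline s$. The delicate point here is that $b_0$, together with the integrals $\int_M\xi_p^{n-1}\wedge\Ric(\omega_r)$ and $\int_M\xi_p^n\wedge(\Ric(\omega_r)-\Delta_r u_r)$ occurring in $b(r)$, are independent of $r$: this follows from the transformation rule \eqref{eq:transRicciomegarands}, one more integration by parts, and a limiting argument on a small invariant ball $B_\varepsilon$ around each $p$, where $d^c\log(\omega_r^n/\pi^*\omega^n)$ is smooth and $O(1/r)$, so that letting $r\to\infty$ replaces $\Ric(\omega_r)-\Delta_r u_r$ by $\pi^*(\Ric(\omega)-\Delta u)$; applying Lemma \ref{lem::equivPD} to the shifted equivariantly closed form $\pi^*(\Ric(\omega)-\Delta u)+\Delta u(p)$, which vanishes on $E_p$, then gives the final expression \eqref{eq::finalexpbr} for $b(r)$. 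I expect this $r$-independence step, and the careful bookkeeping of which additive constant one must subtract so that the relevant form restricts to zero along $E_p$, to be the main obstacle; everything else is bounded error estimates and algebra.

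With the four polynomials in hand, the remaining work is purely algebraic: substitute into \eqref{eq:Futakipolynomial} and Taylor-expand in powers of $1/r$ up to order $r^{-n-1}$ (for $n\ge2$ the spurious terms of the shape $r^{1-2n}$ are absorbed into $O(r^{-n-1})$). The constant term is $b_0/c_0-a_0d_0/c_0^2$, and since $L_r=(\pi^*L)^r\otimes\mathcal O(-\sum_{p}b_pE_p)$ with each $\mathcal O(-E_p)$ invariantly effective, Proposition \ref{prop::contFutaki} identifies this constant with $F(X,L)$. Collecting the $r^{1-n}$ terms gives $\tfrac1{c_0}\sum_p\bigl(u(p)-a_0/c_0\bigr)b_p^{n-1}\int_M\xi_p^{n-1}\wedge\Ric(\omega_r)/(2(n-1)!)$, which after using $a_0/c_0=\underline u$ and $c_0=\int_X\omega^n/n!$ becomes the $r^{1-n}$ term of \eqref{eq::Futakinmainthem}. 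Finally, collecting the $r^{-n}$ terms and substituting $d_0/c_0=\underline s$ and $b_0/c_0-a_0d_0/c_0^2=F(X,L)$ reorganises them into $-\tfrac12\sum_p\bigl(\underline s(u(p)-\underline u)+\Delta u(p)+2F(X,L)\bigr)b_p^n\int_M\xi_p^n/\int_X\omega^n$. This yields \eqref{eq::Futakinmainthem} and completes the proof.
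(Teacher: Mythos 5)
Your proposal is correct and follows essentially the same route as the paper: the same adiabatic decomposition \eqref{eq::expansionomegarnadur}, localization of cross terms on the sets $W_p$ via Lemmata \ref{lem::stokesformula} and \ref{lem::equivPD} after shifting by $u(p)$ (resp. $\Delta u(p)$), the $r$-independence argument based on \eqref{eq:transRicciomegarands} and the limit over $B_\varepsilon$ leading to \eqref{eq::finalexpbr}, and finally the algebraic expansion of \eqref{eq:Futakipolynomial} with the constant term identified as $F(X,L)$ through Proposition \ref{prop::contFutaki}. Nothing essential is missing or different.
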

This result should be considered as an extension of a similar result for isolated quotient singularities \cite[Theorem 2.3]{AreDelMaz2018}.
Some differences with the formula appearing there are due to a different normalization in definition of Futaki invariant.

On the other hand, note that at least the first error term in \eqref{eq::Futakinmainthem} can be expressed almost entirely in terms of intersections numbers on $M$.
Therefore we have the following
\begin{cor}\label{cor::mainresultintersection}
In the situation above, as $r \to \infty$ one has
\begin{equation*}
F(M,L_r)
= F(X,L) 
- r^{1-n} \frac{n}{2L^n}\sum_{p \in S} (u(p) - \underline u) K_M \cdot (b_pE_p)^{n-1} + O(r^{-n}).
\end{equation*}
\end{cor}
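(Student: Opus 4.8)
The plan is to derive Corollary \ref{cor::mainresultintersection} directly from the expansion in Theorem \ref{thm::mainresultfutaki} by identifying the coefficient of $r^{1-n}$ with an intersection number on $M$. Starting from \eqref{eq::Futakinmainthem}, the $O(r^{-n})$ term absorbs everything except the leading correction, so the only work is to rewrite
\[
\frac{n}{2}\sum_{p \in S} (u(p) - \underline u)\, b_p^{n-1}\, \frac{\int_M \xi_p^{n-1} \wedge \Ric(\omega_r)}{\int_X \omega^n}
\]
in terms of the cup product $K_M \cdot (b_pE_p)^{n-1}$.

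First I would recall that $\Ric(\omega_r)$ represents $2\pi c_1(M) = -2\pi c_1(K_M)$ (with whatever normalization makes the de Rham class of $\Ric/2\pi$ integral — here, given the factor conventions in the text, $\Ric(\omega_r)$ represents $c_1(-K_M)$ up to the universal constant already folded into $d_0 = K_X \cdot L^{n-1}/(2(n-1)!)$). Since $\xi_p$ is a closed $(1,1)$-form Poincar\'e dual to the exceptional divisor $E_p$, the product $\int_M \xi_p^{n-1} \wedge \Ric(\omega_r)$ is a topological quantity independent of $r$ (as already noted after \eqref{eq::finalexpbr}), and equals $-K_M \cdot E_p^{n-1}$ up to the same normalization constant that converts $\Ric$ to $c_1$; because $[\xi_p]$ is integral and Poincar\'e dual to $E_p$, one has $\int_M \xi_p^{n-1}\wedge(\text{representative of }c_1(-K_M)) = (-K_M)\cdot E_p^{n-1} = -K_M \cdot E_p^{n-1}$. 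Then $b_p^{n-1} \int_M \xi_p^{n-1}\wedge\Ric(\omega_r)$ becomes $-K_M \cdot (b_pE_p)^{n-1}$ after the constant is matched, and $\int_X \omega^n$ in the denominator is $n!$ times the volume $c_0 = L^n/n!$, which combined with the explicit $n/2$ and the $1/(2(n-1)!)$ hidden in the integral normalization produces the clean factor $n/(2L^n)$ in the statement. The sign flip from $\Ric$ to $K_M$ is what turns the $+$ in \eqref{eq::Futakinmainthem} into the $-$ in the corollary.

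Concretely, the steps in order are: (i) invoke Theorem \ref{thm::mainresultfutaki} to get \eqref{eq::Futakinmainthem}; (ii) discard the $r^{-n}$ term into $O(r^{-n})$; (iii) use that $[\xi_p]$ is Poincar\'e dual to $E_p$ and $[\Ric(\omega_r)/(2(n-1)!)]$ pairs with cycles to give $-K_M \cdot (-)$ (tracking the normalization constant from $d_0 = K_X\cdot L^{n-1}/(2(n-1)!)$), so $\int_M \xi_p^{n-1}\wedge\Ric(\omega_r)/(2(n-1)!) = -K_M \cdot E_p^{n-1}$; (iv) rewrite $\int_X\omega^n = L^n$ under the same normalization; (v) collect constants to obtain $-\frac{n}{2L^n}\sum_p (u(p)-\underline u) K_M\cdot(b_pE_p)^{n-1}$.

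The main obstacle I anticipate is purely bookkeeping: pinning down the exact universal constant relating the differential-geometric $\Ric(\omega_r)$ (with the $1/2$ and factorial factors as written in \eqref{eq::coeffexpchiandw}) to the intersection-theoretic $K_M$, and making sure the $n/2$, the $(n-1)!$, and the passage $\int_X\omega^n \leftrightarrow L^n$ all cancel to leave precisely $n/(2L^n)$ with the correct sign. This is exactly the kind of normalization-matching the authors flag right after Theorem \ref{thm::mainresultfutaki} (``Some differences \dots\ due to a different normalization''), so the proof is essentially a careful constant-chase rather than a conceptual argument; everything else follows formally from the $r$-independence of $\int_M \xi_p^{n-1}\wedge\Ric(\omega_r)$ established in the derivation of \eqref{eq::finalexpbr} and from the defining property of $\xi_p$ as a Poincar\'e dual.
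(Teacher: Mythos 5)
Your proposal is correct and follows essentially the same route as the paper, which obtains the corollary directly from Theorem \ref{thm::mainresultfutaki} by reading the coefficient of $r^{1-n}$ as an intersection number (absorbing the $r^{-n}$ term into the error). One minor clean-up: in the paper's normalization (where $\omega_r$ represents $c_1(L_r)$ and $\Ric(\omega_r)$ represents $c_1(-K_M)$, as used in \eqref{eq::RRequivCC}), the identities are exactly $\int_X \omega^n = L^n$ and $\int_M \xi_p^{n-1}\wedge\Ric(\omega_r) = -K_M\cdot E_p^{n-1}$, so there is no residual $1/(2(n-1)!)$ as written in your step (iii) and no constant-chase is actually needed.
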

This result will be useful in order to produce several examples of K-unstable resolutions in the next section.

\section{Resolutions of semi-stable cubic threefolds}\label{sec::33folds}
In this section we show that most resolution of semi-stable cubic threefolds are K-unstable.
Here we do not need to recall the full definition of K-stability. Instead it is enough to recall that it is a GIT stability notion for polarized varieties (when no polarization is specified, it is assumed to be the anti-canonical bundle), and that the Hilbert-Mumford criterion for $K$-stability implies the following elementary
\begin{fact}
A polarized variety is K-unstable as soon as it carries a $\mathbf C^*$-action with non-zero Futaki invariant.
\end{fact}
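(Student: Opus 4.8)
The plan is to realize the given $\mathbf C^*$-action as a product test configuration and to identify its Donaldson-Futaki invariant with the invariant $F(X,L)$ of Definition \ref{defn::Futaki}, up to a fixed nonzero normalization constant; this is of course well known (see e.g. \cite{Don2002}). Recall that $(X,L)$ is K-semistable precisely when the Donaldson-Futaki invariant of every test configuration for $(X,L)$ is non-negative, so that exhibiting a single test configuration with strictly negative Donaldson-Futaki invariant proves that $(X,L)$ is K-unstable.

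First I would build the test configuration. Through the equivariant embedding $X \hookrightarrow \mathbf{CP}^d$ write the action as a one-parameter subgroup $\lambda \subset SL(d+1,\mathbf C)$ preserving $X$, and set $\mathcal X = X \times \mathbf C$ with the action $t \cdot (x,s) = (\lambda(t)x, ts)$ and $\mathcal L$ the pullback of $L$ with its chosen linearization. This is a product test configuration for $(X,L)$ whose central fibre is $(X,L)$ equipped with the action $\lambda$. Since $L$ is ample, for $k$ large one has $H^0(\mathcal X_0,\mathcal L^k)=H^0(X,L^k)$ with vanishing higher cohomology, the total weight of this $\mathbf C^*$-representation is $w(X,L^k)$, and its dimension is $\chi(X,L^k)$. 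Hence the asymptotic expansion governing the Donaldson-Futaki invariant of $(\mathcal X,\mathcal L)$ is exactly \eqref{eq::expdefnfutaki}, and that invariant equals $F(X,L)$ up to a fixed nonzero universal constant.

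Next I would exploit the behaviour under inverting the action. Replacing $\lambda$ by $\lambda^{-1}$ negates the infinitesimal generator of $H_k=H^0(X,L^k)$, hence negates the weight $w(X,L^k)$, hence negates the constant term $F_1=F(X,L)$ in \eqref{eq::expdefnfutaki}; the dependence on the linearization is irrelevant, since as recalled after Definition \ref{defn::Futaki} it only alters $w(X,L^k)$ by a multiple of $k\chi(X,L^k)$. Thus the product test configurations attached to $\lambda$ and to $\lambda^{-1}$ have Donaldson-Futaki invariants of opposite sign. If $F(X,L)\neq0$ then exactly one of them is strictly negative, violating K-semistability, so $(X,L)$ is K-unstable.

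The argument is essentially formal, and the only points requiring attention are bookkeeping ones: pinning down the sign and normalization relating Definition \ref{defn::Futaki} to one's preferred convention for the Donaldson-Futaki invariant, and observing that a product test configuration is a legitimate destabilizer --- which is automatic here, because $F(X,L)\neq0$ forces the $\mathbf C^*$-action, and hence the test configuration, to be non-trivial, and K-instability only requires the existence of some test configuration with negative Donaldson-Futaki invariant. I do not expect any substantive obstacle beyond these conventions.
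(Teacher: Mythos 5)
Your proposal is correct and coincides with the argument the paper implicitly has in mind: the paper states this as an elementary consequence of the Hilbert--Mumford (test configuration) formulation of K-stability without writing out details, and your product test configuration together with the sign flip under replacing $\lambda$ by $\lambda^{-1}$ is precisely that standard justification. No gaps beyond the sign/normalization conventions you already flag.
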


To begin with observe that by results of Allcock \cite{All2003} and Liu-Xu \cite{LiuXu2017} we have the following clear picture of K-stability of cubic threfolds. 
\begin{thm}
\label{tutticazzi}
Let $X \subset \mathbf{CP}^4$ be a cubic threefold.
\begin{itemize}
\item $X$ is $K$-stable if and only if it is smooth or it has isolated singularities of type $A_k$ with $k \leq 4$.
\item $X$ is K-polystable with non-discrete automorphism group if and only if it is projectively equivalent to the zero locus of one of the following cubic polynomials:
\begin{equation*}
F_\Delta = x_0x_1x_2 + x_3^3 + x_4^3, \qquad F_{A,B} = Ax_2^3 + x_0x_3^2 + x_1^2x_4 - x_0x_2x_4 + Bx_1x_2x_3,
\end{equation*}
with $A$ and $B$ which are not both zero.
\end{itemize} 
\end{thm}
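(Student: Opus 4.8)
\emph{Proof strategy.} The statement is really a $K$-stability reformulation of the Geometric Invariant Theory of cubic threefolds, so the plan is to set up that dictionary and then invoke the two known inputs, one on each side of it. Begin with adjunction: a cubic $X\subset\mathbf{CP}^4$ has $K_X=\mathcal O_X(-2)$, so $X$ is Fano of index $2$ and $K$-stability is understood with respect to $-K_X=\mathcal O_X(2)$. Since $-K_X$ is the square of the hyperplane class $L=\mathcal O_X(1)$ and $K$-stability is unchanged upon replacing a polarisation by a positive power (in the same way that $F(X,L^2)=F(X,L)$ by \eqref{eq::degzerohom}), we may work throughout with $(X,L)$, $L=\mathcal O_X(1)$; this is exactly the setting of Section~2, with $X\subset\mathbf{CP}^4$ and $SL(5,\mathbf C)$ acting linearly. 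Call $X$ \emph{GIT-(semi/poly)stable} according to the behaviour, under this $SL(5,\mathbf C)$-action, of the point representing $X$ in the linear system of cubic forms in five variables. The two steps are then: (a) show $X$ is $K$-(semi/poly)stable if and only if it is GIT-(semi/poly)stable; (b) read the two bullet points off the explicit GIT classification.

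For the easy half of (a): a one-parameter subgroup $\lambda\subset SL(5,\mathbf C)$ degenerates $X$ inside $\mathbf{CP}^4$ to a cubic hypersurface scheme $X_0=\lim_{t\to0}\lambda(t)\cdot X$ (Gorenstein, being a Cartier divisor), giving a test configuration of $(X,L)$ whose Donaldson--Futaki invariant is a fixed positive multiple of the Hilbert--Mumford weight $\mu(X,\lambda)$ --- precisely the kind of equivariant Riemann--Roch computation recalled in Section~2, insensitive, by Paul--Tian \cite{pt} as noted in the introduction, to which equivalent definition of the Futaki invariant one adopts (when $X_0$ is normal this weight is, up to sign and positive scale, the Futaki invariant of the induced action on $X_0$, so the elementary Fact of this section applies verbatim). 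Hence if $X$ is GIT-unstable the destabilising $\lambda$ yields a test configuration with the bad Donaldson--Futaki sign and $X$ is $K$-unstable; tracking closedness of orbits, $K$-polystability likewise forces GIT-polystability. The converse implications --- GIT-semistable $\Rightarrow$ $K$-semistable and GIT-polystable $\Rightarrow$ $K$-polystable --- are the content of Liu--Xu \cite{LiuXu2017}: one must show that a $K$-destabilising test configuration of a cubic threefold can be replaced by one of the above ``hypersurface'' shape, equivalently that every $K$-semistable $\mathbf Q$-Fano degeneration of a cubic threefold is again an anticanonically embedded cubic threefold. This is where the low dimension and index enter, through a valuative ($\delta$-invariant type) estimate for cubic hypersurfaces, and I would simply quote it.

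For (b) I would invoke Allcock's GIT analysis of cubic threefolds \cite{All2003}: a cubic threefold is GIT-stable exactly when it is smooth or has only isolated singularities of type $A_k$ with $k\le4$; the strictly semistable cubic threefolds with closed $SL(5,\mathbf C)$-orbit are described there, and among them those with non-discrete automorphism group --- equivalently, with positive-dimensional stabiliser in $PGL(5,\mathbf C)$ --- are precisely the orbits of $F_\Delta=x_0x_1x_2+x_3^3+x_4^3$, which has three $D_4$ points and a two-dimensional torus in its automorphism group, and of $F_{A,B}=Ax_2^3+x_0x_3^2+x_1^2x_4-x_0x_2x_4+Bx_1x_2x_3$ with $(A,B)\ne(0,0)$. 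Since a smooth cubic threefold, or one with only $A_{\le4}$ singularities, has finite automorphism group, any $K$-polystable cubic threefold with non-discrete automorphisms must be strictly GIT-semistable; combining with step (a) then gives $K$-stable $\iff$ GIT-stable $\iff$ smooth or isolated $A_{\le4}$, and $K$-polystable with non-discrete automorphism group $\iff$ strictly GIT-polystable with positive-dimensional stabiliser $\iff$ projectively equivalent to the zero locus of $F_\Delta$ or of some $F_{A,B}$. That is the theorem.

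The genuine obstacle is step (a): importing the Liu--Xu equivalence is unavoidable, since ruling out the exotic, non-hypersurface destabilising degenerations requires exactly the quantitative $K$-stability input that is the heart of their paper, and nothing in the present note lightens it. Allcock's GIT computation, while also substantial, is a concrete Hilbert--Mumford analysis organised by weight polytopes and admissible singularity types, and would only need to be transcribed. Accordingly the theorem is quoted here rather than reproved; its ``proof'' is the assembly of \cite{All2003} and \cite{LiuXu2017} together with the adjunction remark $-K_X=\mathcal O_X(2)$ that places cubic threefolds into the framework of Section~2.
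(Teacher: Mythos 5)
Your proposal is correct and takes essentially the same route as the paper: the paper offers no proof of Theorem \ref{tutticazzi} at all, simply quoting it as the combination of Allcock's GIT classification \cite{All2003} and Liu--Xu's identification of K-(semi/poly)stability with GIT stability for cubic threefolds \cite{LiuXu2017}, which is exactly the assembly you arrive at. Your extra gloss (adjunction $K_X=\mathcal O_X(-2)$, the Hilbert--Mumford versus Donaldson--Futaki dictionary, and the observation that the hard content is Liu--Xu's exclusion of non-hypersurface degenerations) is a sensible reconstruction of why the citation suffices.
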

Resolutions of K-stable cubic threefolds have no non-trivial holomorphic vector fields.
Therefore, in order to study K-instability of their resolutions one should consider test configuration along the lines of \cite{Sto2010,Del2008}.
On the other hand, studying K-instability of resolutions of (strictly) K-polystable cubic threefolds is more direct thanks to corollary \ref{cor::mainresultintersection}.
In view of this application, observe that any K-polystable cubic threefold $X \subset \mathbf{CP}^4$ is $\mathbf Q$-Gorenstein, in that the anti-canonical bundle of the smooth locus extends to $K_X^{-1}$.
Moreover, the latter is (very) ample and the restriction $L$ of hyperplane bundle to $X$ satisfies $L^2 = K_X^{-1}$.
We consider separately the cases $F_\Delta$ and $F_{A,B}$.

\subsection{$F_\Delta$}
Let $X \subset \mathbf{CP}^4$ be the zero locus of $F_\Delta = x_0x_1x_2 + x_3^3 + x_4^3$.
As one can readily check, the singular locus of $X$ is constituted by three coordinate points 
\begin{equation}
S = \left\{ p_0=(1:0:0:0:0), p_1=(0:1:0:0:0), p_2=(0:0:1:0:0) \right\}.
\end{equation}
Each of them is a $D_4$ singularity, since $X$ is locally equivalent to $z_1^2+z_2^2+z_3^3+z_4^3$ around any $p \in S$.
Now pick $\alpha_0,\alpha_1,\alpha_2 \in \mathbf Z$ such that $\alpha_0+\alpha_1+\alpha_2=0$ and consider the diagonal action of $\mathbf C^*$ on $\mathbf{CP}^4$ induced by $\diag(t^{\alpha_1},t^{\alpha_2},t^{\alpha_3},1,1)$, where $t \in \mathbf C^*$.
Clearly $X$ is invariant with respect to this action.
A potential with respect to the Fubini-Study metric $\omega_{FS}$ for the generator of the induced circle action is given by
\begin{equation*}
u_{FS} = \frac{\alpha_0 |x_0|^2 + \alpha_1 |x_1|^2 + \alpha_2 |x_2|^2}{|x|^2}.
\end{equation*}
By direct calculation, one can check that the average $\underline u = \int_X u_{FS} \omega_{FS}^3/ \int_X \omega_{FS}^3$ is zero.
Now consider a resolution $\pi : M \to X$ and let, as in the general case discussed above, $E_j$ be the exceptional divisor over $p_j \in S$.
Chosen an integer $b_j>0$ for each $p_j \in S$, consider the line bundle $$L_r = \pi^*L^r \otimes \mathcal O(-\sum_{j=0}^2 b_jE_j),$$ which is ample for all $r$ sufficiently large.
By corollary \ref{cor::mainresultintersection} we get
\begin{equation*}
F(M,L_r)
= -\frac{1}{2r^2} \sum_{j=0}^2 \alpha_j K_M \cdot (b_jE_j)^{2} + O(1/r^3),
\end{equation*}
where we used that $F(X,L)=0$ thanks to K-polystability of $X$, that $\underline u=0$ as discussed above, and that $L^3=3$.
As a consequence, as soon as $b_j$ are chosen so that $K_M \cdot (b_0E_0)^{2}, K_M \cdot (b_1E_1)^{2},K_M \cdot (b_2E_2)^{2}$ are not all the same, one can choose the $\alpha_j$'s so that $F(M,L_r)$ is non-zero for large $r$.
Therefore we proved the following
\begin{prop}
\label{FDelta}
With the notation above, any polarized (log) resolution $(M,L_r)$ of the cubic threefold $F_\Delta=0$ is K-unstable for $r$ sufficiently large as soon as the intersection numbers $K_M \cdot (b_0E_0)^{2}, K_M \cdot (b_1E_1)^{2},K_M \cdot (b_2E_2)^{2}$ are not all the same.
\end{prop}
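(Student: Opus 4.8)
The plan is to apply Corollary \ref{cor::mainresultintersection} directly, with the only work being to identify each ingredient of its formula in the present situation. First I would record the geometry of $X=\{F_\Delta=0\}$: a quick check of the partial derivatives of $F_\Delta$ shows that the singular locus is exactly the three coordinate points $p_0,p_1,p_2$, and that near each $p_j$ the equation is, after a linear change of coordinates on the relevant affine chart, of the form $z_1^2+z_2^2+z_3^3+z_4^3$, so each $p_j$ is a $D_4$ (hence isolated, hence the hypotheses of Theorem \ref{thm::mainresultfutaki} and Corollary \ref{cor::mainresultintersection} are met after fixing an equivariant log resolution $\pi\colon M\to X$). I would also note $L^3 = \deg X = 3$, which is the quantity $L^n$ appearing in the denominator of the corollary with $n=3$.

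Next I would introduce the torus action. Pick $\alpha_0,\alpha_1,\alpha_2\in\mathbf Z$ with $\alpha_0+\alpha_1+\alpha_2=0$ and let $\mathbf C^*$ act on $\mathbf{CP}^4$ by $\diag(t^{\alpha_0},t^{\alpha_1},t^{\alpha_2},1,1)$; since $x_0x_1x_2$, $x_3^3$, $x_4^3$ all have weight $0$ (using $\alpha_0+\alpha_1+\alpha_2=0$), $X$ is invariant. The three singular points $p_0,p_1,p_2$ are fixed, as they must be. A Hamiltonian potential for the induced circle action with respect to a circle-invariant Fubini--Study form is
\begin{equation*}
u_{FS} = \frac{\alpha_0|x_0|^2+\alpha_1|x_1|^2+\alpha_2|x_2|^2}{|x|^2},
\end{equation*}
and $u=u_{FS}|_X$; evaluating at $p_j$ gives $u(p_j)=\alpha_j$. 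One then computes the average $\underline u = \int_X u\,\omega_{FS}^3\big/\int_X\omega_{FS}^3$; by the symmetry of $X$ under permuting $(x_0,x_1,x_2)$ together with permuting $(\alpha_0,\alpha_1,\alpha_2)$, the integral $\int_X |x_j|^2\omega_{FS}^3/|x|^2$ is independent of $j\in\{0,1,2\}$, so $\underline u$ is proportional to $\alpha_0+\alpha_1+\alpha_2=0$, hence $\underline u=0$. Finally, $X$ is $K$-polystable by Theorem \ref{tutticazzi}, so $F(X,L)=0$ (here using that $K$-polystability forbids a nonzero Futaki invariant; alternatively $F(X,L)=0$ follows because the $\mathbf C^*$ we chose lies in a reductive automorphism group of a polystable pair).

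With these inputs, Corollary \ref{cor::mainresultintersection} with $n=3$, $u(p_j)=\alpha_j$, $\underline u=0$, $L^3=3$, $F(X,L)=0$ yields
\begin{equation*}
F(M,L_r) = -\frac{1}{2r^{2}}\sum_{j=0}^{2}\alpha_j\, K_M\cdot(b_jE_j)^{2} + O(r^{-3}).
\end{equation*}
To conclude, suppose the three numbers $\beta_j := K_M\cdot(b_jE_j)^2$ are not all equal. Then the linear functional $(\alpha_0,\alpha_1,\alpha_2)\mapsto \sum_j\alpha_j\beta_j$ is not identically zero on the hyperplane $\{\alpha_0+\alpha_1+\alpha_2=0\}$ (its restriction vanishes only if $\beta_0=\beta_1=\beta_2$), so one can choose integers $\alpha_j$ summing to zero with $\sum_j\alpha_j\beta_j\neq0$. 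For such a choice the leading term above is a nonzero multiple of $r^{-2}$, so $F(M,L_r)\neq0$ for all sufficiently large $r$; by the Fact recalled at the start of Section \ref{sec::33folds}, $(M,L_r)$ is then $K$-unstable. Since $(M,L_r)$ is ample for $r$ large, this proves the proposition.

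The only genuinely non-formal points are the two ``by symmetry'' claims: that each $p_j$ really is a $D_4$ point (so that the isolated-singularity hypothesis holds and $\pi$ exists as an equivariant log resolution) and that $\underline u = 0$. The first is a standard local normal-form computation for the cubic; the second is the observation that the $S_3$-symmetry of $F_\Delta$ permuting $x_0,x_1,x_2$ is compatible with permuting the weights, forcing $\int_X|x_0|^2\omega^3/|x|^2 = \int_X|x_1|^2\omega^3/|x|^2 = \int_X|x_2|^2\omega^3/|x|^2$ and hence $\underline u \propto \sum_j\alpha_j = 0$. Everything else is bookkeeping in Corollary \ref{cor::mainresultintersection}.
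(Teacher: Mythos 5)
Your proposal is correct and follows essentially the same route as the paper: the same diagonal $\mathbf C^*$-action with $\alpha_0+\alpha_1+\alpha_2=0$, the same potential $u_{FS}$ with $u(p_j)=\alpha_j$, the vanishing of $F(X,L)$ and of $\underline u$, and then Corollary \ref{cor::mainresultintersection} with $n=3$, $L^3=3$, concluding via the choice of $\alpha_j$ on the hyperplane $\sum_j\alpha_j=0$. The only (welcome) refinement is that you justify $\underline u=0$ by the $S_3$-symmetry permuting $x_0,x_1,x_2$, where the paper simply invokes a direct calculation.
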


\subsection{$F_{A,B}$}
Let $X \subset \mathbf{CP}^4$ be the zero locus of $F_{A,B} = Ax_2^3 + x_0x_3^2 + x_1^2x_4 - x_0x_2x_4 + Bx_1x_2x_3$ where at least one of $A$ and $B$ is non-zero.
As described by Allcock \cite{All2003}, different choices of the pair $A$, $B$ give projectively equivalent threefolds if and only if they give the same $\beta = 4A/B^2 \in \mathbf C \cup \{\infty\}$.
In other words, $\beta$ is a moduli parameter.
The singularities of $X$ depend on $\beta$.
If $\beta \neq 0,1$ then $X$ has precisely two singular points of type $A_5$.
If $\beta = 0$ then an additional singular point of type $A_1$ appears.
If $\beta = 1$ then the singular locus of $X$ is a rational curve.
We drop the latter case since singularities are non-isolated.
On the other hand, remaining cases are quite similar each other. 
Therefore we consider in some detail the case $\beta=0$ and we leave the other ones as an exercise for the reader.

Thus, from now on, $X \subset \mathbf{CP}^4$ will be the zero locus of $F_{0,1} = x_0x_3^2 + x_1^2x_4 - x_0x_2x_4 + x_1x_2x_3$.
One can directly check that the singular locus of $X$ is constituted by three coordinate points 
\begin{equation}
S = \left\{ p_0=(1:0:0:0:0), p_2=(0:0:1:0:0), p_4=(0:0:0:0:1) \right\}.
\end{equation}
The points $p_0$, $p_4$ turn out to be singularities of type $A_5$, whereas $p_2$ is an $A_1$ singularity.
Looking for $\mathbf C^*$-actions on $\mathbf{CP}^4$ which preserve $X$, one find that all of them are coverings of the one induecd by $\diag(t^{-2},t^{-1},1,t,t^2)$, where $t \in \mathbf C^*$.
A potential with respect to the Fubini-Study metric $\omega_{FS}$ for the generator of the induced circle action is given by
\begin{equation*}
u_{FS} = \frac{-2 |x_0|^2 - |x_1|^2 + |x_3|^2 + 2 |x_4|^2}{|x|^2}.
\end{equation*}
Note that the transformation which maps $(x_0:\dots:x_4)$ to $(x_4:\dots:x_0)$ is an holomorphic isometry of $\mathbf{CP}^4$ that preserves $X$ and transforms $u_{FS}$ into $-u_{FS}$. 
As a consequence, the average $\underline u = \int_X u_{FS} \omega_{FS}^3/ \int_X \omega_{FS}^3$ is zero.
Now let $\pi : M \to X$ be a (log) resolution and let $E_j$ be the exceptional divisor over $p_j \in S$.
Choose an integer $b_j>0$ for each $p_j \in S$, and consider the line bundle $$L_r = \pi^*L^r \otimes \mathcal O(-\sum_{j=0}^2 b_{2j}E_{2j}),$$ which is ample for all $r$ sufficiently large.
By corollary \ref{cor::mainresultintersection} we get
\begin{equation}
F(M,L_r)
= \frac{1}{r^2} \sum_{j=0}^2 (1-j) K_M \cdot (b_{2j}E_{2j})^{2} + O(1/r^3),
\end{equation}
where we used that $F(X,L)=0$ thanks to K-polystability of $X$, that $\underline u=0$ as discussed above, and that $L^3=3$.
Note that the local resolution chosen for the $A_1$ singularity $p_2$ does not affect the stability of $(M,L_r)$.
On the other hand, $F(M,L_r)$ is non-zero for all $r$ sufficiently large whenever $b_0$, $b_4$ are chosen so that $K_M \cdot (b_0E_0)^{2} + K_M \cdot (b_4E_4)^{2} \neq 0$.

A minor adjustment of argument above extends the result above for resolutions of the zero locus of $F_{A,B}$ with $B^2\neq4A$.
Summarizing we have the following
\begin{prop}
With notation above, any polarized (log) resolution $(M,L_r)$ of the cubic threefold $F_{A,B}=0$ with $4A\neq B^2$ is K-unstable for $r$ sufficiently large as soon as $K_M \cdot (b_0E_0)^{2} + K_M \cdot (b_4E_4)^{2} \neq 0$.
\end{prop}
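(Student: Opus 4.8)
The plan is to run, essentially verbatim, the computation already carried out for $F_{0,1}$: the condition $4A\neq B^2$ is exactly $\beta=4A/B^2\neq1$, and the case $\beta=0$ is included in (and was treated in detail under) that hypothesis, so the only thing to verify is that nothing relevant changes when $\beta\neq0,1$. First I would fix, following Allcock \cite{All2003}, one representative $F_{A,B}$ per $\beta$-class and record the equivariant structure. A one-line check on the five monomials of $F_{A,B}=Ax_2^3+x_0x_3^2+x_1^2x_4-x_0x_2x_4+Bx_1x_2x_3$ shows that the diagonal one-parameter subgroup $\diag(t^{-2},t^{-1},1,t,t^2)$ of $SL(5,\mathbf C)$ has weight $0$ on $F_{A,B}$ for \emph{all} $A,B$; hence it preserves $X=\{F_{A,B}=0\}$, with the same hamiltonian potential $u_{FS}=(-2|x_0|^2-|x_1|^2+|x_3|^2+2|x_4|^2)/|x|^2$ as in the case $\beta=0$. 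Moreover the linear involution $(x_0:\cdots:x_4)\mapsto(x_4:\cdots:x_0)$ carries $F_{A,B}$ to itself and $u_{FS}$ to $-u_{FS}$, so $\underline u=\int_X u_{FS}\,\omega_{FS}^3/\int_X\omega_{FS}^3=0$, again exactly as before.

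Next I would read off the singular locus from $\nabla F_{A,B}=0$ (or simply quote Theorem~\ref{tutticazzi}): for $\beta\neq0,1$ it consists of the two $A_5$ points $p_0=(1:0:0:0:0)$ and $p_4=(0:0:0:0:1)$, with the extra $A_1$ point $p_2=(0:0:1:0:0)$ appearing only when $\beta=0$; in every case these points are $\mathbf C^*$-fixed, $X$ is $\mathbf Q$-Gorenstein with $K_X^{-1}=L^2$ very ample and $L^3=3$, and the potential values are $u(p_0)=-2$, $u(p_4)=2$, $u(p_2)=0$. Since $4A\neq B^2$, Theorem~\ref{tutticazzi} also tells us that $X$ is $K$-polystable, hence $F(X,L)=0$ for the $\mathbf C^*$-action above.

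I would then take an equivariant log resolution $\pi:M\to X$ (it exists by equivariant resolution of singularities) and the associated ample line bundle $L_r=\pi^*L^r\otimes\mathcal O(-\sum_p b_pE_p)$, and substitute $n=3$, $L^3=3$, $\underline u=0$, $F(X,L)=0$, together with the values of $u$ above, into Corollary~\ref{cor::mainresultintersection}. The $p_2$-term (when present) drops out because $u(p_2)=0$, so the choice of local resolution over any $A_1$ point is immaterial, and one is left with
\begin{equation*}
F(M,L_r)=\frac{1}{r^{2}}\Bigl(K_M\cdot(b_0E_0)^{2}-K_M\cdot(b_4E_4)^{2}\Bigr)+O(1/r^{3})\qquad(r\to\infty).
\end{equation*}
Exactly as in the case $\beta=0$ discussed above, the hypothesis $K_M\cdot(b_0E_0)^{2}+K_M\cdot(b_4E_4)^{2}\neq0$ then prevents this bracket from vanishing, so $F(M,L_r)\neq0$ for $r$ sufficiently large and $(M,L_r)$ is $K$-unstable by the Fact at the head of this section.

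I do not expect a genuine obstacle: the derivation of Corollary~\ref{cor::mainresultintersection} never used the number of singular points or their analytic type, only that they are isolated, $\mathbf C^*$-fixed and $\mathbf Q$-Gorenstein, and all of this persists for every $4A\neq B^2$. The steps that actually require checking are the three elementary computations above — the weight-$0$ invariance of $F_{A,B}$, the symmetry forcing $\underline u=0$, and the list of singular points with their $u$-values — plus the passage from the displayed formula to the stated non-vanishing criterion, which reproduces the corresponding step of the $\beta=0$ case; the only external input is the classification of Allcock and Liu--Xu recalled in Theorem~\ref{tutticazzi}, used both for the singular locus and for the $K$-polystability of $X$.
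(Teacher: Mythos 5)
Your route is the same as the paper's: the paper proves this proposition only by remarking that a ``minor adjustment'' of the $\beta=0$ computation applies, and you supply exactly those adjustments --- the weight count showing $\diag(t^{-2},t^{-1},1,t,t^2)$ preserves $\{F_{A,B}=0\}$ for all $A,B$, the involution $x_j\mapsto x_{4-j}$ forcing $\underline u=0$, the singular locus $\{p_0,p_4\}$ (with $p_2$ only for $\beta=0$) and the values $u(p_0)=-2$, $u(p_4)=2$, $u(p_2)=0$, the vanishing $F(X,L)=0$ from K-polystability, and the substitution into Corollary \ref{cor::mainresultintersection} with $n=3$, $L^3=3$. All of that is correct and matches the source.

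The difficulty is the very last inference, which you inherit verbatim from the text: your (correct) expansion has leading coefficient $K_M\cdot(b_0E_0)^2-K_M\cdot(b_4E_4)^2$, a \emph{difference}, whereas the hypothesis of the proposition controls the \emph{sum} $K_M\cdot(b_0E_0)^2+K_M\cdot(b_4E_4)^2$. The sum being nonzero does not prevent the difference from vanishing (take $K_M\cdot(b_0E_0)^2=K_M\cdot(b_4E_4)^2\neq0$); concretely, if the resolution is chosen equivariantly with respect to the involution swapping $p_0$ and $p_4$ and $b_0=b_4$, that involution conjugates the $\mathbf C^*$-action into its inverse, so $F(M,L_r)$ vanishes identically for this action even though the sum is typically nonzero --- and since, up to coverings and inversion, this is the only available $\mathbf C^*$-action (unlike the $F_\Delta$ case, where a two-parameter family lets one adjust the weights $\alpha_j$), no choice of action rescues the step. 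So the sentence ``the hypothesis then prevents this bracket from vanishing'' is a non sequitur; what your computation actually establishes is K-instability under $K_M\cdot(b_0E_0)^2\neq K_M\cdot(b_4E_4)^2$, which is also the natural analogue of the ``not all the same'' condition in Proposition \ref{FDelta}. The same slip appears in the paper's own sentence preceding the proposition, so your write-up is faithful to the source, but as mathematics the final step only goes through with the difference (inequality) condition in place of the stated sum.
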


\end{document}